\DeclareMathOperator\diag{diag}
\DeclareMathOperator\spec{spec}
\newtheorem{prop}{Proposition}
\newtheorem{lemma}{Lemma}
\newtheorem{definition}{Definition}
\begin{document}

\begin{frontmatter}




\title{Lattice Boltzmann for linear elastodynamics: periodic problems and Dirichlet boundary conditions}


\author[1]{Oliver Boolakee}
\author[2]{Martin Geier}
\author[1]{Laura De Lorenzis\corref{cor1}}
\ead{ldelorenzis@ethz.ch}

\address[1]{Department of Mechanical and Process Engineering, ETH Zürich, 8092 Zürich, Switzerland}
\address[2]{Institute for Computational Modeling in Civil Engineering, TU Braunschweig, 38106 Braunschweig, Germany}
\cortext[cor1]{Corresponding author}

\begin{abstract}
We propose a new second-order accurate lattice Boltzmann formulation for linear elastodynamics that is stable for arbitrary combinations of material parameters under a CFL-like condition. The construction of the numerical scheme uses an equivalent first-order hyperbolic system of equations as an intermediate step, for which a vectorial lattice Boltzmann formulation is introduced. The only difference to conventional lattice Boltzmann formulations is the usage of vector-valued populations, so that all computational benefits of the algorithm are preserved. Using the asymptotic expansion technique and the notion of pre-stability structures we further establish second-order consistency as well as analytical stability estimates. Lastly, we introduce a second-order consistent initialization of the populations as well as a boundary formulation for Dirichlet boundary conditions on 2D rectangular domains. All theoretical derivations are numerically verified by convergence studies using manufactured solutions and long-term stability tests.
\end{abstract}

\begin{keyword}
Lattice Boltzmann method \sep Vectorial LBM \sep Elastodynamics.

\end{keyword}

\end{frontmatter}


\section{Introduction}

The lattice Boltzmann method (LBM) \cite{McNamara1988,He1997,Succi2001,Kruger2017} is designed to approximate solutions to a greatly simplified version of the Boltzmann equation known from gas dynamics. The primary solution quantity of the LBM are the so-called populations, which are obtained by evaluating the distribution function (the unknown of the Boltzmann equation) at a finite number of microscopic velocities times some weight. The gas kinetic origin that governs the evolution of these populations endows the derived numerical method with beneficial properties such as algorithmic simplicity and good scaling during parallelization \cite{Schoenherr2011,Pasquali2016,Kutscher2019}. The simplifications leading from the Boltzmann equation to the LBM are carefully chosen in such a way that the macroscopic behavior of the target partial differential equations (PDEs) is still retained. In the native variant of the LBM, the target PDEs are the incompressible Navier-Stokes equations, which are satisfied by the statistical moments of the populations with second-order accuracy \cite{Lallemand2021}. However, there exist many LBM formulations that solve other PDEs, including the diffusion  \cite{Wolf-Gladrow1995}, shallow water  \cite{Zhou2000} and wave equations \cite{Guangwu2000} and many more. Thereby, the LBM is considered purely from a numerical viewpoint as a discretization method to approximate solutions to a quite general class of PDEs.

More recently, a number of investigations taking this viewpoint attempted to benefit from the advantageous properties of the LBM to solve problems in solid mechanics, naturally starting from linear elasticity. For linear elastostatics, \cite{Yin2016} proposed a diffusion-type formulation in pseudo-time that converges at steady state to the solution of the elliptic linear elasticity equation. Based on this idea, we recently proposed a new second-order consistent LBM formulation for linear elastostatics that can handle arbitrary 2D geometries with Dirichlet and Neumann boundary conditions \cite{Boolakee2023,Boolakee2023a}. Despite retaining the favorable features of native LBM, this formulation is not competitive with existing numerical methods for linear elastostatics such as the finite element method, due to the need for time stepping to the steady-state solution. 

On the other hand, due to the inherent properties of LBM, it is reasonable to expect a competitive performance with respect to the finite element method for dynamic (and/or non-linear) problems. In this spirit, a number of works have been proposed to solve linear elastodynamics using LBM. Initial qualitative numerical experiments date back to \cite{Marconi2003,OBrien2012}, whereas \cite{Murthy2018} provides a more quantitative analysis establishing a new formulation. Based on this scheme a number of further developments for increased efficiency and boundary conditions are reported in \cite{Escande2020,Schluter2018,Schluter2021,Faust2024}. However, none of the previous works for elastodynamics has succeeded to establish second-order accuracy nor demonstrated stability for arbitrary combinations of material parameters.

In this paper, we seek to overcome the previous limitations, while remaining close to the basic algorithmic steps of the native LBM in order to retain all its computationally advantageous properties. Already in \cite{Murthy2018,Escande2020} it became apparent that LBM formulations based on a standard velocity set (i.\,e. populations with velocities that only travel to the next neighbors and not beyond) do not provide enough flexibility in order to independently adjust the different material parameters of linear elasticity while also accommodating the second time derivative that appears in the target equation. The approach proposed in \cite{Murthy2018,Escande2020,Faust2022,Faust2024} employs a standard velocity set along with the concept of moment chains, which needs to be combined with finite difference approximations in order to supply the missing quantities. However, the method requires significant artificial dissipation in order to remain stable on finite time intervals -- thus jeopardizing second-order consistency. Additionally, even with the added dissipation the von Neumann stability analysis in \cite{Escande2020} indicates unconditional instability except for one special combination of material parameters, for which the aforementioned finite difference correction cancels out.
One possible remedy involves using larger velocity sets \cite{Dubois2023}, but the numerical analysis turns out to be highly complex and the stability properties of those methods are still largely unclear. Additionally, the design of boundary conditions (especially for complex geometries) is expected to be very challenging and has -- to the best of the authors' knowledge -- never been addressed for arbitrary geometries. 

To circumvent the aforementioned issues, in this paper we adopt the so-called vectorial LBM, which involves vector-valued instead of scalar populations \cite{Dellar2009,Graille2014,Dubois2014,Zhao2020,Zhao2024,Bellotti2024}. In a sense, multi distribution formulations such as those in \cite{He1998m,Wang2013} can be also subsumed into the class of vectorial LBM. If we consider the so-called acoustic scaling, it is straightforward to show that the vectorial LBM with a vectorial counterpart of the well-known BGK collision operator approximates first-order hyperbolic systems of PDEs with second-order consistency in the non-dissipative limit \cite{Graille2014}. In fact, the vectorial LBM can be seen as a particularly simple numerical approximation of so-called relaxation schemes for systems of conservation laws \cite{Jin1995,Wissocq2023}.

Based on the vectorial LBM framework, we establish a novel formulation for linear elastodynamics with second-order consistency and rigorous stability estimates, that retains all computational benefits of conventional LBM and enables arbitrary combinations of material parameters. 
The manuscript is organized as follows: in Section \ref{sec:equation}, as a starting point, we summarize the target problem of linear elastodynamics, whereby we limit ourselves to periodic and Dirichlet problems, both on rectangular two-dimensional domains. We also introduce an equivalent reformulation of our target equation as a first-order hyperbolic system of PDEs and conclude with a nondimensionalization of the problem. Section \ref{sec:lbm} introduces the algorithmic structure of  vectorial LBM and describes all problem-dependent choices for our given target equation, initial and boundary conditions. The consistency of our proposed scheme is investigated in Section \ref{sec:analysis}, whereas the stability is the focus of Section \ref{sec:stability}. All analytical consistency and stability results are numerically verified in Section \ref{sec:verification}.

\section{Linear elastodynamics and its reformulation for LBM} \label{sec:equation}

In this section, we start by introducing our target problem, i.\,e. linear elastodynamics in 2D. We will show that it is possible to handle both a pure 2D setting \cite{Tokuoka1977} and 2D domains embedded in 3D space with the plane-strain or plane-stress assumption by modifying the definition of a single problem parameter. As a result, the entire analysis can be applied to all cases without any further changes. In this contribution, we restrict ourselves to unbounded domains with periodicity or Dirichlet problems on rectangular domains; an extension towards Neumann boundary conditions and more general problem geometries will be the focus of future work. Next, we introduce the equivalent reformulation of our target problem as a first-order hyperbolic system of equations as an intermediate step to construct the vectorial LBM formulation. Lastly, all quantities are nondimensionalized for the handling in LBM.

\subsection{Linear elastodynamics}

The target equation of linear elastodynamics reads
\begin{equation}
    \rho\partial_t^2 \boldsymbol{u} = \nabla \cdot \bar{\boldsymbol{\sigma}} + \bar{\boldsymbol{b}} \qquad \text{on }\Omega\times(0,t_f)\subset\mathbb{R}^2\times\mathbb{R},
\end{equation}
where $\boldsymbol{u}:\Omega\times(0,t_f)\rightarrow\mathbb{R}^2$ is the displacement, $\bar{\boldsymbol{\sigma}}$ the Cauchy stress tensor and $\bar{\boldsymbol{b}}$ the body load. We assume that all functions are sufficiently regular. The governing equation holds on the 2D domain $\Omega$ and until some final time $t_f>0$. The material density $\rho>0$ is assumed to be constant and can therefore be removed by re-scaling the Cauchy stress and the body load:
\begin{equation}
\label{eq:teq}
    \partial_t^2 \boldsymbol{u} = \nabla \cdot \boldsymbol{\sigma} + \boldsymbol{b} \qquad \text{on }\Omega\times(0,t_f)\subset\mathbb{R}^2\times\mathbb{R},
\end{equation}
with $\boldsymbol{b}:=\Bar{\boldsymbol{b}}/\rho$ and the linear elastic material law
\begin{equation}
\label{eq:cauchy}
    \boldsymbol{\sigma} := \bar{\boldsymbol{\sigma}}/\rho =  c_K^2 (\nabla \cdot \boldsymbol{u}) \boldsymbol{I} + c_{\mu}^2 \left(\nabla\boldsymbol{u}+(\nabla\boldsymbol{u})^T-(\nabla \cdot \boldsymbol{u}) \boldsymbol{I}\right).
\end{equation}
Here we introduced the elastic wave speeds $c_K$ and $c_{\mu}$ given by the following expressions
\begin{align}
\label{eq:2D}
    \text{2D continua:}\quad & c_K:=\sqrt{K/\rho}  \\
\label{eq:plane_strain}
    \text{Plane strain:}\quad & c_K:=\sqrt{\frac{3K+\mu}{3\rho}}  \\
    \label{eq:plane_stress}
    \text{Plane stress:}\quad & c_K:=\sqrt{\frac{9K\mu}{(3K+4\mu)\rho}}   \\
\label{eq:all}
    \text{All cases:}\quad & c_{\mu}:=\sqrt{\mu/\rho},
\end{align}
with $K,\mu>0$ as the bulk and shear modulus, respectively. 

The target equation is furnished with initial and boundary conditions, whereby we limit ourselves to unbounded domains with periodicity or the Dirichlet problem. In the former case we assume periodic behavior along each axis direction of the Cartesian reference frame, i.\,e.
\begin{equation}
    \label{eq:periodic}
    \boldsymbol{u}(\boldsymbol{x}+L_x\boldsymbol{e}_x+L_y\boldsymbol{e}_y,t) = \boldsymbol{u}(\boldsymbol{x},t) \qquad \forall (\boldsymbol{x},t)\in\mathbb{R}^2\times(0,t_f),
\end{equation}
where $L_x,L_y\in\mathbb{R}$ denote the periodic lengths along the two axis directions with orthonormal basis vectors $\boldsymbol{e}_x, \boldsymbol{e}_y\in\mathbb{R}^2$. In the latter case, we prescribe the following condition on the entire boundary
\begin{equation}
    \label{eq:bc}
    \boldsymbol{u} = \boldsymbol{u}_D \qquad \text{on }\partial\Omega\times(0,t_f),
\end{equation}
involving the boundary value function $\boldsymbol{u}_D$. In both cases, we additionally supply initial conditions for the displacement field as well as its first time derivative
\begin{alignat}{4}
\label{eq:init_u}
    \boldsymbol{u} &= \boldsymbol{u}_0 \qquad &&\text{on }\Omega\times\{0\} \\
    \label{eq:init_v}
    \partial_t\boldsymbol{u} &= \boldsymbol{v}_0 \qquad &&\text{on }\Omega\times\{0\},
\end{alignat}
where $\boldsymbol{u}_0$ and $\boldsymbol{v}_0$ denote the initial displacement and the initial velocity, respectively. Finally, we assume that the initial and the boundary conditions are compatible, so that smooth solutions are obtained.

\subsection{Reformulation as a first-order hyperbolic system}

As motivated in the introduction, we reformulate the target equation (Eq. \eqref{eq:teq}) as an equivalent first-order hyperbolic system of equations. To this end, let us first introduce a few intermediate quantities and express all equations with respect to a Cartesian reference frame with the already introduced basis vectors $\boldsymbol{e}_x$ and $\boldsymbol{e}_y$. Accordingly, Eq. \eqref{eq:teq} consists of two equations in the scalar components $u_x$ and $u_y$ of the displacement vector, i.\,e. $\boldsymbol{u} = u_x \boldsymbol{e}_x +  u_y \boldsymbol{e}_y$. Next, we define the velocity components as the time derivatives of the scalar displacement components
\begin{align}
\label{eq:vx}
    v_x &:= \partial_t u_x \\ v_y &:= \partial_t u_y, \label{eq:vy}
\end{align}
and further define the following expressions based on the spatial derivatives of the displacement field
\begin{align}
\label{eq:js}
    j_s &:= -c_K(\partial_x u_x + \partial_y u_y) \\
    \label{eq:jd}
    j_d &:= -c_{\mu}(\partial_x u_x - \partial_y u_y) \\
    \label{eq:jxy}
    j_{xy} &:= -c_{\mu}(\partial_x u_y + \partial_y u_x).
\end{align}
Hereby, $\partial_x$ and $\partial_y$ denote the space derivative along the x- and y-direction of the Cartesian basis, respectively. Using the definitions, Eq. \eqref{eq:teq} is transformed into an equivalent first-order hyperbolic system form
\begin{equation}
\label{eq:system}
\begin{split}
    &\partial_t \boldsymbol{U} + \partial_x \boldsymbol{\Phi}_x(\boldsymbol{U}) + \partial_y \boldsymbol{\Phi}_y(\boldsymbol{U}) = \boldsymbol{B} \qquad \text{on }\Omega\times(0,t_f) \\
    &\text{with } \boldsymbol{U}:= \left[\begin{matrix}
        v_x \\ v_y \\ j_s \\ j_d \\ j_{xy}
    \end{matrix}\right] \quad \boldsymbol{\Phi}_x(\boldsymbol{U}) := \left[\begin{matrix}
        c_K j_s + c_{\mu} j_d \\ c_{\mu} j_{xy} \\ c_K v_x \\ c_{\mu} v_x \\ c_{\mu} v_y
    \end{matrix}\right] \quad \boldsymbol{\Phi}_y(\boldsymbol{U}) := \left[\begin{matrix}
        c_{\mu} j_{xy} \\ c_K j_s - c_{\mu} j_d \\ c_K v_y \\ -c_{\mu} v_y \\ c_{\mu} v_x
    \end{matrix}\right] \quad \boldsymbol{B} := \left[\begin{matrix}
        b_x \\ b_y \\ 0 \\ 0 \\ 0
    \end{matrix}\right],
\end{split}
\end{equation}
where $b_x$ and $b_y$ denote the x- and y-component of the body load $\boldsymbol{b}$, respectively. 
It will turn out in Section \ref{sec:stability} that the specific form of the definitions in Eqs. \eqref{eq:js}-\eqref{eq:system} enables a straightforward stability analysis.

Furthermore, let us transform the initial and boundary conditions of Eqs. \eqref{eq:periodic}-\eqref{eq:init_v} into equivalent conditions for the system in Eq. \eqref{eq:system}. Utilizing the initial conditions for the displacement field and its time derivative in Eqs. \eqref{eq:init_u} and \eqref{eq:init_v}, we can directly generate the initial conditions for all quantities in the primary solution vector $\boldsymbol{U}$ by using the definitions of Eqs. \eqref{eq:vx}-\eqref{eq:jxy} as follows:
\begin{equation}
\label{eq:ic_vector}
    \boldsymbol{U}_0 = \left[\begin{matrix}
        v_{0x} & v_{0y} & -c_K(\partial_x u_{0x} +\partial_y u_{0y}) & -c_{\mu}(\partial_x u_{0x} -\partial_y u_{0y}) & -c_{\mu}(\partial_x u_{0y} +\partial_y u_{0x})
    \end{matrix}\right]^T \qquad \text{on } \Omega\times\{0\},
\end{equation}
where $u_{0x}$, $u_{0y}$, $v_{0x}$ and $v_{0y}$ respectively denote the x- and y-components of the initial displacement and of the initial velocity. Similarly, we transform the periodicity assumption of Eq. \eqref{eq:periodic} or the Dirichlet boundary condition in Eq. \eqref{eq:bc} into analogous expressions for the equivalent system. For the periodic domain, this yields
\begin{equation}
    \boldsymbol{U}(\boldsymbol{x}+L_x\boldsymbol{e}_x+L_y\boldsymbol{e}_y,t) = \boldsymbol{U}(\boldsymbol{x},t) \qquad \forall (\boldsymbol{x},t)\in\mathbb{R}^2\times(0,t_f).
\end{equation}
For the reformulation of the Dirichlet boundary condition, let us introduce the outward pointing unit normal vector on the boundary $\boldsymbol{n}=n_x\boldsymbol{e}_x+n_y\boldsymbol{e}_y$, with its components $n_x$ and $n_y$. Note that for the rectangular domains considered in this contribution, we have $(n_x,n_y)\in\{(1,0),(-1,0),(0,1),(0,-1)\}$. Altogether, we obtain a mixed-type boundary condition involving components of the primary solution vector $\boldsymbol{U}$ as well as its fluxes $\boldsymbol{\Phi}_x(\boldsymbol{U})$ and $\boldsymbol{\Phi}_y(\boldsymbol{U})$
\begin{equation}
\label{eq:bc_vector}
\begin{split}
    \boldsymbol{\Psi}(\boldsymbol{U}):=&\boldsymbol{I}_u\boldsymbol{U}+n_x\boldsymbol{I}_{\Phi}\boldsymbol{\Phi}_x(\boldsymbol{U}) + n_y\boldsymbol{I}_{\Phi}\boldsymbol{\Phi}_y(\boldsymbol{U}) = \boldsymbol{S}_{bc}\partial_t\boldsymbol{u}_D \quad \text{on } \partial\Omega\times(0,t_f) \\
    \text{with}\qquad &\begin{split}\boldsymbol{I}_u &:= \diag{(1,1,0,0,0)},\\\boldsymbol{I}_{\Phi} &:= \diag{(0,0,1,1,1)},\end{split} \qquad \boldsymbol{S}_{bc} :=\left[\begin{matrix}1&0\\0&1\\ c_Kn_x&c_Kn_y\\c_\mu n_x&-c_\mu n_y\\c_\mu ny&c_\mu n_x\end{matrix}\right],
\end{split}
\end{equation}
which involves the first time derivative of the Dirichlet boundary value $\boldsymbol{u}_D$.

\subsection{Nondimensionalization}

For the solution of the target equation using the LBM all expressions are transformed into dimensionless form, for which we introduce the reference length $L$ and time $T$. Because of the linearity of the problem we can scale the velocity solution field arbitrarily by some reference speed $V$. As a result, all dimensionless quantities involved in the system of Eq. \eqref{eq:system} are obtained as follows:
\begin{equation}
\label{eq:nondim}
    \left\{\begin{array}{l}
         \nabla =: L^{-1}\Tilde{\nabla} \\
         \partial_t =: T^{-1} \partial_{\Tilde{t}} \\
         \boldsymbol{v} =: V\Tilde{\boldsymbol{v}}
    \end{array}\right. \quad \Rightarrow \quad \boldsymbol{U} = V\Tilde{\boldsymbol{U}}, \qquad 
    \begin{matrix}
        \boldsymbol{\Phi}_x = LT^{-1}V\Tilde{\boldsymbol{\Phi}}_x \\
        \boldsymbol{\Phi}_y = LT^{-1}V\Tilde{\boldsymbol{\Phi}}_y
    \end{matrix}, \qquad 
    \boldsymbol{B} = T^{-1}V\Tilde{\boldsymbol{B}}, \qquad \begin{matrix}
        c_K = LT^{-1}\Tilde{c}_K \\ c_{\mu} = LT^{-1}\Tilde{c}_{\mu}
    \end{matrix}.
\end{equation}
With the dimensionless position $\boldsymbol{x}=L\Tilde{\boldsymbol{x}}$ and time $t=T\Tilde{t}$, we further define $t_f=T\Tilde{t}_f$ and the dimensionless domain
\begin{equation}
    \Tilde{\Omega} := \left\{\Tilde{\boldsymbol{x}}\in\mathbb{R}^2:L\Tilde{\boldsymbol{x}}=\boldsymbol{x}; \forall\boldsymbol{x}\in\Omega\right\},
\end{equation}
and similarly for the dimensionless domain boundary $\partial\Tilde{\Omega}$.

\section{Vectorial lattice Boltzmann formulation for linear elastodynamics} \label{sec:lbm}

As already shown in \cite{Graille2014,Dubois2014,Zhao2020,Zhao2024,Bellotti2024}, vectorial LBM is a promising generalization of the conventional LBM that allows for greater design freedom to solve coupled systems of PDEs while still retaining the computational advantages of the conventional LBM. Accordingly, we seek to find a second-order consistent vectorial LBM formulation to solve Eq. \eqref{eq:system}, which in turns yields a second-order consistent solution to the original target equation, i.\,e. Eq. \eqref{eq:teq}. In the following, we first introduce the lattice, time discretization and velocity set on which LBM operates as well as the key algorithmic steps of LBM using vector-valued populations. This is followed by the specific choices required to solve our target problem in Eq. \eqref{eq:system}, including formulations for the interior of the domain, initial conditions and Dirichlet boundary conditions. All these formulations are shown to be second-order consistent in Section \ref{sec:analysis}.

\subsection{Vectorial lattice Boltzmann algorithm}

As in conventional LBM, we discretize space and time with a uniform lattice involving the dimensionless grid spacing $\Delta x>0$ and the dimensionless time step size $\Delta t>0$. As a result, all quantities are evaluated only on the set of lattice node positions $\mathbb{G}$ and at discrete times $\mathbb{T}$, as defined below.
\begin{align}
    \mathbb{G}&:=\left\{\boldsymbol{x}\in\Tilde{\Omega}:\boldsymbol{x}=\boldsymbol{x}_0+(k\boldsymbol{e}_x+l\boldsymbol{e}_y)\Delta x;\;\forall (k,l)\in\mathbb{Z}^2\right\} \\
    \mathbb{T}&:=\left\{t\in(0,\Tilde{t}_f):t=m\Delta t;\;\forall m\in\mathbb{N}\right\}
\end{align}
where $\boldsymbol{x}_0\in\mathbb{R}^2$ accounts for some potentially nonzero offset of the lattice. 
From here on, all positions and times are by definition dimensionless if they are elements of $\mathbb{G}$ or $\mathbb{T}$. Therefore, we omit the $\Tilde{(\bullet)}$ when referring to space and time in order to avoid cluttering notation.

Additionally, we define a set of microscopic velocities
\begin{equation}
\label{eq:set_vel}
    \mathbb{V}:=\{\boldsymbol{c}_{ij}\in\mathbb{R}^2:\boldsymbol{c}_{ij}=c(i\boldsymbol{e}_x+j\boldsymbol{e}_y);\; \forall (i,j)\in\mathcal{V}\subset\mathbb{Z}^2\},
\end{equation}
with index set $\mathcal{V}$, for each of which we introduce a population $\boldsymbol{f}_{ij}$ that travels with the respective velocity $\boldsymbol{c}_{ij}$. Note that $c=\Delta x/\Delta t$, which by construction guarantees that all populations travel from one lattice node to another during each time step (neglecting lattice node positions next to possible domain boundaries).

\begin{figure}[htb]
    \centering
    \includegraphics{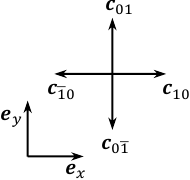}
    \caption{D2Q4 velocity set using Miller indices, i.\,e. $\bar{1}:=-1$.}
    \label{fig:d2q4}
\end{figure}

The novel aspect of vectorial LBM is that the populations are vector-valued functions, i.\,e. $\boldsymbol{f}_{ij}:\mathbb{G}\times\mathbb{T}\rightarrow\mathbb{R}^n$ for all $(i,j)\in\mathcal{V}$ and with nonzero $n\in\mathbb{N}$ as the number of scalar components. Each of the scalar components of the population set is used to approximate the solution of one of the scalar equations in Eq. \eqref{eq:system}, which by itself is a simple hyperbolic conservation law-type problem. The number of microscopic velocities, i.\,e. $|\mathbb{V}|$, determines the amount of information and the complexity of the problem that can be solved by the LBM formulation because it governs the number of independent moments that can be computed from the available populations \cite{Geier2015cum,Lallemand2021,Dubois2023}. In our case, each scalar equation in Eq. \eqref{eq:system} involves three quantities (one component of $\boldsymbol{U}$, $\boldsymbol{\Phi}_x$ and $\boldsymbol{\Phi}_y$), therefore we need at least three independent moments and velocities per scalar equation. In order to obtain a regular Cartesian lattice in 2D, we add one additional velocity and set the corresponding extra moment to zero (see Eq. \eqref{eq:cond_eq} in Section \ref{sec:lbm_elastodynamics}). Following the nomenclature in \cite{Graille2014}, we thus adopt a vectorial D2Q4$^5$ LBM stencil for our problem in Eq. \eqref{eq:system} with $n=5$ scalar equations. The corresponding index set of the velocities and populations is therefore defined as follows
\begin{equation}
\label{eq:index_vel}
    \mathcal{V} := \{(1,0),(0,1),(-1,0),(0,-1)\},
\end{equation}
see Figure \ref{fig:d2q4} for an illustration of the velocity set.
In order to compute the evolution of the vector-valued populations according to the LBM, we solve at each time step the lattice Boltzmann equation
\begin{equation}
\label{eq:lbe}
    \boldsymbol{f}_{ij}(\boldsymbol{x}+\boldsymbol{c}_{ij}\Delta t,t+\Delta t) = \omega \boldsymbol{f}_{ij}^{eq}\left(\boldsymbol{f}_{ij}(\boldsymbol{x},t);\Tilde{\boldsymbol{B}}(\boldsymbol{x},t)\right) + (1-\omega)\boldsymbol{f}_{ij}(\boldsymbol{x},t)+\Delta t(2-\omega)W_{ij}\Tilde{\boldsymbol{B}}(\boldsymbol{x},t) \qquad \forall(\boldsymbol{x},t)\in\mathbb{G}\times\mathbb{T}.
\end{equation}
The origin of this equation from gas dynamics and the main steps needed to obtain this numerical algorithm are summarized in \ref{app:lbm}. Conceptually, Eq. \eqref{eq:lbe} is decomposed into the two following stages:
\begin{alignat}{4}
\label{eq:collision}
    \text{Collision:}&\qquad \boldsymbol{f}_{ij}^*(\boldsymbol{x},t) = \omega \boldsymbol{f}_{ij}^{eq}\left(\boldsymbol{f}_{ij}(\boldsymbol{x},t);\Tilde{\boldsymbol{B}}(\boldsymbol{x},t)\right) + (1-\omega)\boldsymbol{f}_{ij}(\boldsymbol{x},t) + \Delta t(2-\omega)W_{ij}\Tilde{\boldsymbol{B}}(\boldsymbol{x},t) \qquad &&\forall(\boldsymbol{x},t)\in\mathbb{G}\times\mathbb{T}\\
    \label{eq:streaming}
    \text{Streaming:}&\qquad \boldsymbol{f}_{ij}(\boldsymbol{x}+\boldsymbol{c}_{ij}\Delta t,t+\Delta t) = \boldsymbol{f}_{ij}^*(\boldsymbol{x},t) \qquad &&\forall(\boldsymbol{x},t)\in\mathbb{G}\times\mathbb{T}.
\end{alignat}
Note that the streaming step written as above assumes an infinite domain and therefore neglects any special handling at the domain boundaries. Periodic and Dirichlet boundary conditions are discussed in Section \ref{sec:lbm_bcs_elastodynamics}.

The collision formulation in Eq. \eqref{eq:collision} represents the vectorial form of the well-known BGK collision \cite{BGK}, the simplest and most popular choice in LBM \cite{Lallemand2021}. With this choice, the collision stage is modeled by a relaxation towards a local equilibrium state $\boldsymbol{f}_{ij}^{eq}$, which itself depends on the current state of the populations as well as on some external forcing $\Tilde{\boldsymbol{B}}$. The dimensionless relaxation rate $\omega\in[0,2]$ governs "how fast" this relaxation towards equilibrium takes place, whereby $\omega\in(1,2]$ corresponds to the so-called overrelaxation \cite{Lallemand2021}. We will see in Section \ref{sec:analysis} that we require $\omega=2$ for second-order consistency, which eliminates the forcing term contribution in the collision rule of Eq. \eqref{eq:collision}. Thus, we leave the constants $W_{ij}$ undefined.  Note that the key design parameter of the collision operator is the definition of the equilibrium populations, which has to be designed in such as way as to approximate the solution of Eq. \eqref{eq:system} (see Section \ref{sec:lbm_elastodynamics}).

Subsequently, the post-collision populations $\boldsymbol{f}_{ij}^*$ are streamed to their neighbors according to Eq. \eqref{eq:streaming}. For all interior lattice nodes this corresponds to a simple shift operation in the implementation of the method. 

\subsection{Formulation for elastodynamics in the domain interior} \label{sec:lbm_elastodynamics}

The connection between the evolution of the populations and that of the solution quantities of Eq. \eqref{eq:system} is achieved through so-called moments. Due to the strong simplification of the distribution function being evaluated only at a few selected 
 velocities, the moment definitions simplify to weighted sums of the populations. As will be confirmed by the consistency analysis in Section \ref{sec:analysis}, we need to satisfy the following sufficient conditions for a consistent approximation of Eq. \eqref{eq:system}, i.\,e. $\lim_{\Delta t\rightarrow0,\Delta x\rightarrow0}\Tilde{\boldsymbol{U}}^{num}=\Tilde{\boldsymbol{U}}$:
\begin{equation}
\label{eq:cond_eq}
\left\{\begin{array}{rl}
     \Tilde{\boldsymbol{U}}^{num} &\overset{!}{=} \displaystyle\sum_{(i,j)\in\mathcal{V}} \boldsymbol{f}_{ij}^{eq}  \\
     \Tilde{{\boldsymbol{\Phi}}}_x(\Tilde{\boldsymbol{U}}^{num}) &\overset{!}{=} \displaystyle\sum_{(i,j)\in\mathcal{V}} ci\boldsymbol{f}_{ij}^{eq} \\
     \Tilde{{\boldsymbol{\Phi}}}_y(\Tilde{\boldsymbol{U}}^{num}) &\overset{!}{=} \displaystyle\sum_{(i,j)\in\mathcal{V}} cj\boldsymbol{f}_{ij}^{eq} \\
     \boldsymbol{0} &\overset{!}{=} \displaystyle\sum_{(i,j)\in\mathcal{V}} c^2(i^2-j^2)\boldsymbol{f}_{ij}^{eq}
\end{array}\right. \qquad\qquad \text{with }\Tilde{\boldsymbol{U}}^{num} := \sum_{(i,j)\in\mathcal{V}} \boldsymbol{f}_{ij} + \frac{\Delta t}{2}\Tilde{\boldsymbol{B}}
\end{equation}
Note that the definition of $\Tilde{\boldsymbol{U}}^{num}$ implies that we identify the numerical approximation of the primary solution vector with the so-called zeroth-order moment of the distribution function (plus some forcing to account for the body load).

The conditions for all equilibrium moments (Eq. \eqref{eq:cond_eq}) can be transformed into a single expression for the equilibrium populations such that all of them are simultaneously satisfied, leading to the following expression:
\begin{equation}
\label{eq:feq}
    \boldsymbol{f}_{ij}^{eq} = \frac{1}{4}\left[\Tilde{\boldsymbol{U}}^{num}+\frac{2}{c}\left(i\Tilde{{\boldsymbol{\Phi}}}_x(\Tilde{\boldsymbol{U}}^{num})+j\Tilde{{\boldsymbol{\Phi}}}_y(\Tilde{\boldsymbol{U}}^{num})\right)\right] \qquad\forall(i,j)\in\mathcal{V}.
\end{equation}

\subsection{Initial conditions for elastodynamics} \label{sec:lbm_ics_elastodynamics}
Based on the consistency analysis reported in Section \ref{sec:analysis}, we propose the following  initialization of the populations to prescribe the initial condition of Eq. \eqref{eq:ic_vector} with second-order accuracy:
\begin{align}
        \boldsymbol{f}_{ij}(\boldsymbol{x},0) &= \frac{1}{4}\left[\Tilde{\boldsymbol{U}}_0+\frac{2}{c}\left(i\Tilde{\boldsymbol{\Phi}}_x\left(\Tilde{\boldsymbol{U}}_0\right) +j\Tilde{\boldsymbol{\Phi}}_y\left(\Tilde{\boldsymbol{U}}_0\right)\right)\right] -\frac{\Delta t}{8}\left\{\left[\Tilde{\boldsymbol{B}}+\frac{2}{c}\left(i\Tilde{\boldsymbol{\Phi}}_x\left(\Tilde{\boldsymbol{B}}\right) +j\Tilde{\boldsymbol{\Phi}}_y\left(\Tilde{\boldsymbol{B}}\right)\right)\right]\right. \nonumber \\
        \label{eq:ic_second}
        &+c\left[ i\partial_x \Tilde{\boldsymbol{U}}_0 + \frac{2i^2-1}{c}\Tilde{\boldsymbol{\Phi}}_x\left(\partial_x\Tilde{\boldsymbol{U}}_0\right) - \frac{2}{c^2}\left(i\Tilde{\boldsymbol{\Phi}}_x\left(\Tilde{\boldsymbol{\Phi}}_x\left(\partial_x\Tilde{\boldsymbol{U}}_0\right)\right)+j\Tilde{\boldsymbol{\Phi}}_y\left(\Tilde{\boldsymbol{\Phi}}_x\left(\partial_x\Tilde{\boldsymbol{U}}_0\right)\right)\right)\right]\\ \nonumber
        &+c\left.\left[ j \partial_y \Tilde{\boldsymbol{U}}_0 + \frac{2j^2-1}{c}\Tilde{\boldsymbol{\Phi}}_y\left(\partial_y\Tilde{\boldsymbol{U}}_0\right) - \frac{2}{c^2}\left(i\Tilde{\boldsymbol{\Phi}}_x\left(\Tilde{\boldsymbol{\Phi}}_y\left(\partial_y\Tilde{\boldsymbol{U}}_0\right)\right)+j\Tilde{\boldsymbol{\Phi}}_y\left(\Tilde{\boldsymbol{\Phi}}_y\left(\partial_y\Tilde{\boldsymbol{U}}_0\right)\right)\right)\right]\right\} \qquad \forall \boldsymbol{x}\in\mathbb{G} \text{ and } \forall (i,j)\in\mathcal{V}.
    \end{align}
Note that on the right hand-side of Eq. \eqref{eq:ic_second} we omitted the argument $\boldsymbol{x}$ of $\Tilde{\boldsymbol{U}}_0$ for brevity. Further note that the initial condition requires up to second-order spatial derivatives of the displacement initial condition $\boldsymbol{u}_0$ and first-order spatial derivatives of the velocity initial condition $\boldsymbol{v}_0$, which we assume to be computable from the available data.

\subsection{Boundary conditions for elastodynamics} \label{sec:lbm_bcs_elastodynamics}
\subsubsection{Periodic boundary conditions}
\label{sec:pbc}
In the case of rectangular periodic domains, 
as illustrated in Figure \ref{fig:periodic}, the populations that are streamed out of the domain on the right side are applied as incoming populations on the left side of the domain, and vice versa for outgoing populations on the left side and incoming ones on the right side. Thus, the two sides are coupled together by the periodicity assumption. A similar procedure is performed at the top and bottom side, therefore obtaining a rectangular periodic domain.
\begin{figure}[htb]
    \centering
    \includegraphics{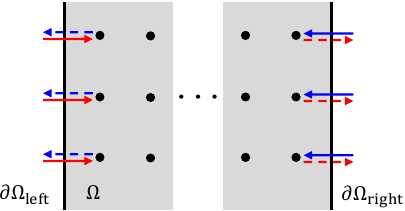}
    \caption[Periodicity along the horizontal direction.]{Periodicity along the horizontal direction. The outgoing populations on the right side (red dashed arrows) are streamed back into the domain on the left side (red solid arrows), and vice versa from left to right (blue arrows).}
    \label{fig:periodic}
\end{figure}
\subsubsection{Dirichlet boundary conditions}
\label{sec:dbc}
Next, we illustrate a boundary formulation for the Dirichlet boundary condition introduced in Eq. \eqref{eq:bc_vector}, which will be shown to be second-order consistent in Section \ref{sec:analysis}. 
Preliminarily, we introduce a few useful definitions, see also the visualization in Figure \ref{fig:boundary}.
First, we define the set of boundary nodes, which are all lattice nodes with missing neighbors because they lie outside the problem domain:
\begin{equation}
    \mathbb{B} := \left\{\boldsymbol{x}\in\mathbb{G}:\exists(i,j)\in\mathcal{V};\; \boldsymbol{x}-\boldsymbol{c}_{ij}\Delta t \notin \mathbb{G}\right\}.
\end{equation}
We further introduce an index set $\mathcal{D}_{\boldsymbol{x}}\subset\mathcal{V}$, which contains the velocity indices of all missing incoming populations for a given lattice node $\boldsymbol{x}\in\mathbb{G}$:
\begin{equation}
    \mathcal{D}_{\boldsymbol{x}} := \left\{(i,j)\in\mathcal{V}:\boldsymbol{x}-\boldsymbol{c}_{ij}\Delta t \notin \mathbb{G} \text{ for a given }\boldsymbol{x}\in\mathbb{G}\right\}.
\end{equation}
Note that for all interior nodes this set is empty, i.\,e. $\mathcal{D}_{\boldsymbol{x}}=\emptyset$ for all $\boldsymbol{x}\in\mathbb{G}\setminus\mathbb{B}$. For convenience, we further introduce the negation of an index set as follows:
\begin{equation}
    -\mathcal{V}:=\left\{(\Bar{\imath},\Bar{\jmath})\in\mathcal{V}:\forall(i,j)\in\mathcal{V}\right\} \qquad \text{with } \Bar{\imath}:=-i.
\end{equation}
If we apply this to $\mathcal{D}_{\boldsymbol{x}}$ for a given boundary node $\boldsymbol{x}\in\mathbb{B}$, $-\mathcal{D}_{\boldsymbol{x}}\subset\mathcal{V}$ is the index set of populations of this node that would travel outside the domain under normal streaming. Finally, we define the normalized wall distance $q_{ij}\in(0,1]$
\begin{equation}
    q_{ij} := \frac{|\boldsymbol{x}-\boldsymbol{x}_b|}{|\boldsymbol{c}_{ij}\Delta t|} \qquad \forall \boldsymbol{x}\in\mathbb{B} \text{ and } \forall (i,j)\in\mathcal{D}_{\boldsymbol{x}},
\end{equation}
where $\boldsymbol{x}_b = \boldsymbol{x}_b\left(\boldsymbol{x},(i,j)\right)$ is the intersection point of the lattice link with the physical boundary $\partial\Omega$ for a given boundary node $\boldsymbol{x}\in\mathbb{B}$ and velocity index $(i,j)\in\mathcal{D}_{\boldsymbol{x}}$. In summary, our boundary formulation needs to provide suitable values for all missing incoming populations $\boldsymbol{f}_{ij}$ at each boundary node and lattice link which intersects the boundary.

\begin{figure}[htb]
    \centering
    \includegraphics{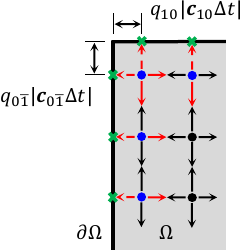}
    \caption[Visualization of node and index sets at the domain boundary.]{Visualization of node and index sets at the domain boundary. Interior nodes $\mathbb{G}\setminus\mathbb{B}$ -- black dots; boundary nodes $\mathbb{B}$ -- blue dots; normal streaming indices $\mathcal{V}\setminus\mathcal{D}_{\boldsymbol{x}}$ for each $\boldsymbol{x}\in\mathbb{G}$ --  black arrows; missing incoming populations $\mathcal{D}_{\boldsymbol{x}}$ for each $\boldsymbol{x}\in\mathbb{G}$ -- red solid arrows; outgoing populations $-\mathcal{D}_{\boldsymbol{x}}$ for each $\boldsymbol{x}\in\mathbb{G}$ -- red dashed arrows; boundary intersection points $\boldsymbol{x}_b$ -- green crosses.}
    \label{fig:boundary}
\end{figure}

With the above definitions, we propose the following second-order consistent Dirichlet boundary formulation:
\begin{alignat}{3}
    \label{eq:bc_ansatz}
    \boldsymbol{f}_{ij}(\boldsymbol{x},t+\Delta t) &= \boldsymbol{D} \boldsymbol{f}^*_{\Bar{\imath}\Bar{\jmath}}(\boldsymbol{x},t) + \boldsymbol{S}_{ij}(\boldsymbol{x}_b,t+\Delta t/2) \qquad &&\forall (\boldsymbol{x},t)\in\mathbb{B}\times\mathbb{T} \text{ and } \forall (i,j)\in\mathcal{D}_{\boldsymbol{x}} \\ \nonumber &&\text{ and } &\forall (\boldsymbol{x}_b,t+\Delta t/2)\in\partial\Tilde{\Omega}\times(0,\Tilde{t}_f); \\
    \text{with}\qquad \boldsymbol{D} &:= \diag(-1,-1,1,1,1)&& \label{eq:bc_d} \\
    \boldsymbol{S}_{ij} &:= \left[\begin{array}{rr}
    1/2 & 0 \\ 0 & 1/2 \\ i\Tilde{c}_K & j \Tilde{c}_K \\ i\Tilde{c}_{\mu} & -j\Tilde{c}_{\mu} \\ j\Tilde{c}_{\mu} & i\Tilde{c}_{\mu}
    \end{array}\right]\partial_t \Tilde{\boldsymbol{u}}_D && \label{eq:bc_s} \\
    q_{ij} &= 1/2  &&\forall \boldsymbol{x}\in\mathbb{B} \text{ and } \forall (i,j)\in\mathcal{D}_{\boldsymbol{x}} \label{eq:bc_q}
\end{alignat}

Note that as a consequence of the definition of $\boldsymbol{D}$ in Eq. \eqref{eq:bc_d}, we employ the anti bounce-back rule for the first two components and the bounce-back rule for the remainder of the population vector. This enables us to fully define a consistent reconstruction of the missing incoming population based only on the time derivative of the Dirichlet boundary condition, where Eq. \eqref{eq:bc_s} can be seen as the discretized counterpart of $\boldsymbol{S}_{bc}$ in Eq. \eqref{eq:bc_vector}. The strong restriction given by Eq. \eqref{eq:bc_q} implies that - so far - we can only analyze Dirichlet problems on rectangular domains, where the problem domain boundary lies exactly half a lattice spacing away from the boundary node.

\subsection{Postprocessing} \label{sec:lbm_postprocessing}
Because we are numerically approximating the equivalent problem of Eq. \eqref{eq:system}, we do not directly obtain a numerical approximation of the displacement solution $\boldsymbol{u}^{num}$. However, using the velocity approximation,
\begin{equation}
    \boldsymbol{v}^{num} := V\left[\begin{matrix}1&0&0&0&0\\0&1&0&0&0\end{matrix}\right]\Tilde{\boldsymbol{U}}^{num},
\end{equation}
we can compute the displacement solution with second-order consistency using the trapezoidal rule:
\begin{equation}
\label{eq:trapezoidal}
    \begin{array}{ll}
        \boldsymbol{u}^{num}(\boldsymbol{x},t) = \boldsymbol{u}^{num}(\boldsymbol{x},t-\Delta t) + \frac{\Delta t}{2}\left(\boldsymbol{v}^{num}(\boldsymbol{x},t-\Delta t) + \boldsymbol{v}^{num}(\boldsymbol{x},t)\right) + \mathcal{O}\left(\Delta t^2\right) &\qquad\forall(\boldsymbol{x},t)\in\mathbb{G}\times\mathbb{T}; \\
        \boldsymbol{u}^{num}(\boldsymbol{x},0) = \boldsymbol{u}_0(\boldsymbol{x}) &\qquad\forall\boldsymbol{x}\in\mathbb{G}.
    \end{array}
\end{equation}
In order to avoid storing the previous velocity solution $\boldsymbol{v}^{num}(\boldsymbol{x},t-\Delta t)$, we already add its contribution according to Eq. \eqref{eq:trapezoidal} at the end of the previous iteration and use this intermediate quantity $\boldsymbol{u}^*$ to obtain the displacement solution at the next time step (see Section \ref{sec:summary}). Therefore, we require only the current velocity solution $\boldsymbol{v}^{num}(\boldsymbol{x},t)$ to compute the displacement approximation $\boldsymbol{u}^{num}(\boldsymbol{x},t)$. Note that second-order accuracy of the displacement solution critically relies on the second-order consistency of the lattice Boltzmann approximation of $\boldsymbol{U}$ to be demonstrated in Section \ref{sec:analysis}.

Additionally, the numerical approximation of the Cauchy stress components is directly obtained from the primary solution vector $\Tilde{\boldsymbol{U}}^{num}$ as follows:
\begin{equation}
    \label{eq:sigma_num}
    \boldsymbol{\sigma}^{num}:=\left[\begin{matrix}\sigma_{xx}^{num}\\\sigma_{yy}^{num}\\\sigma_{xy}^{num}
    \end{matrix}\right] = -V\left[\begin{matrix}0&0&c_K&c_\mu&0\\0&0&c_K&-c_\mu&0\\0&0&0&0&c_\mu\end{matrix}\right]\Tilde{\boldsymbol{U}}^{num}.
\end{equation}

\subsection{Summary} \label{sec:summary}
In summary, the following steps are necessary for a single time step of our lattice Boltzmann formulation for linear elastodynamics:
\begin{enumerate}
    \item Collision
    \begin{enumerate}[(a)]
        \item Compute the vector of the primary solution variables, i.\,e. $\Tilde{\boldsymbol{U}}^{num}=\sum_{(i,j)\in\mathcal{V}}\boldsymbol{f}_{ij}+\frac{\Delta t}{2}\Tilde{\boldsymbol{B}}$.
        \item Compute the displacement solution, i.\,e. $\boldsymbol{u}^{num} = \boldsymbol{u}^* + \frac{\Delta t}{2}\boldsymbol{v}^{num}$, and the Cauchy stress solution using Eq. \eqref{eq:sigma_num}.
        \item Construct the local equilibrium populations using Eq. \eqref{eq:feq}.
        \item Perform the collision step (Eq. \eqref{eq:collision}) to obtain the post-collision populations $\boldsymbol{f}_{ij}^*$.
    \end{enumerate}
    \item Streaming
    \begin{enumerate}[(a)]
        \item Execute streaming in the domain interior (see Eq. \eqref{eq:streaming}).
        \item Perform periodic boundary handling (Section \ref{sec:pbc}) or apply Dirichlet boundary conditions (Section \ref{sec:dbc}).
        \item Prepare the displacement solution, i.\,e. $\boldsymbol{u}^*=\boldsymbol{u}^{num} + \frac{\Delta t}{2}\boldsymbol{v}^{num}$, using the same $\boldsymbol{v}^{num}$ computed during collision.
    \end{enumerate}
\end{enumerate}

\section{Consistency of the proposed method} \label{sec:analysis}

In this section, using the asymptotic expansion technique \cite{Junk2005,Geier2015cum}, we analyze the consistency of the formulation proposed in Section \ref{sec:lbm} using the D2Q4$^5$ velocity set, i.\,e. the velocity set defined in Eq. \eqref{eq:set_vel} with indices as in Eq. \eqref{eq:index_vel}. For the formulation in the interior of the domain, in order to achieve second-order consistency, 
we require zero dissipation during collision, i.\,e. $\omega=2$. 
We also prove second-order consistency for the proposed initial condition and Dirichlet boundary conditions. Also for boundary conditions, the analysis is based on the asymptotic expansion technique in a way similar to \cite{Junk2005BC,Yang2007,Boolakee2023a}.

\subsection{Second-order consistency in the domain interior}

First, we investigate the second-order consistency of the formulation introduced in Section \ref{sec:lbm_elastodynamics}. During the proof using asymptotic analysis it will turn out that the second-order consistency in the domain interior critically depends on the accuracy of the initialization (see Section \ref{sec:lbm_ics_elastodynamics}) and the boundary formulation (see Section \ref{sec:dbc}). To this end, we define the notion of a \emph{second-order consistent initialization} and a \emph{second-order consistent boundary formulation}, which implies consistent behavior with respect to the physical initial or boundary condition at leading order of the asymptotic expansion and zero initial or boundary conditions at the next higher order. Accordingly, let us introduce all requirements in the following proposition:

\begin{prop}[Consistency in the domain interior] \label{prop:consistency_interior}
    The vectorial lattice Boltzmann algorithm (Eq. \eqref{eq:lbe}) on a periodic domain is second-order consistent w.\,r.\,t. Eq. \eqref{eq:system} if the following conditions are satisfied:
    \begin{enumerate}[(i)]
        \item Definition of $\Tilde{\boldsymbol{U}}^{num}$ as in Eq. \eqref{eq:cond_eq};
        \item Equilibrium populations given by Eq. \eqref{eq:feq}, therefore satisfying Eq. \eqref{eq:cond_eq};
        \item Acoustic scaling, i.\,e. $c=\Delta x/\Delta t=const$;
        \item $\omega=2$;
        \item Second-order consistent initialization and second-order order consistent boundary formulation.
    \end{enumerate}
\end{prop}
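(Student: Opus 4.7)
The plan is to follow the asymptotic expansion of \cite{Junk2005}, as adapted to lattice Boltzmann in \cite{Geier2015cum}. I introduce the small parameter $\epsilon := \Delta t$; by the acoustic scaling (iii) this also controls $\Delta x = c\,\epsilon$ with $c$ fixed. Setting $\omega=2$ in \eqref{eq:lbe} and Taylor expanding the streamed population to third order rewrites the lattice Boltzmann equation as the pointwise implicit-midpoint relation
\begin{equation*}
    \boldsymbol{f}_{ij}^{eq} - \boldsymbol{f}_{ij} = \tfrac{\Delta t}{2}(\partial_t + \boldsymbol{c}_{ij}\cdot\nabla)\boldsymbol{f}_{ij} + \tfrac{\Delta t^2}{4}(\partial_t + \boldsymbol{c}_{ij}\cdot\nabla)^2 \boldsymbol{f}_{ij} + \mathcal{O}(\epsilon^3).
\end{equation*}
Inserting the formal expansion $\boldsymbol{f}_{ij} = \sum_{k\geq 0}\epsilon^k\boldsymbol{f}_{ij}^{(k)}$ and analogous expansions of $\tilde{\boldsymbol{U}}^{num}$ and $\tilde{\boldsymbol{B}}$, I would collect powers of $\epsilon$ to obtain a hierarchy whose leading order forces $\boldsymbol{f}_{ij}^{(0)} = \boldsymbol{f}_{ij}^{eq,(0)}$ and whose $\mathcal{O}(\epsilon)$ line determines the non-equilibrium corrector $\boldsymbol{f}_{ij}^{(1)}$ up to its zero-moment kernel.

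Next I would take the zeroth moment of the identity above, summing over $(i,j)\in\mathcal{V}$. Condition (i), $\tilde{\boldsymbol{U}}^{num} = \sum\boldsymbol{f}_{ij} + \tfrac{\Delta t}{2}\tilde{\boldsymbol{B}}$, combined with the first line of \eqref{eq:cond_eq}, converts the left-hand side into $\tfrac{\Delta t}{2}\tilde{\boldsymbol{B}}$ exactly; dividing by $\Delta t/2$ and using the second and third lines of \eqref{eq:cond_eq} to recognize the first velocity moments of $\boldsymbol{f}^{eq}$ as the target fluxes yields
\begin{equation*}
    \partial_t\tilde{\boldsymbol{U}}^{num} + \partial_x\tilde{\boldsymbol{\Phi}}_x(\tilde{\boldsymbol{U}}^{num}) + \partial_y\tilde{\boldsymbol{\Phi}}_y(\tilde{\boldsymbol{U}}^{num}) = \tilde{\boldsymbol{B}} + \mathcal{O}(\epsilon),
\end{equation*}
which is the system \eqref{eq:system} with first-order error. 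To upgrade to second order, I substitute $\boldsymbol{f}_{ij}^{(0)}=\boldsymbol{f}_{ij}^{eq,(0)}$ into the $\Delta t^2$ term and then eliminate the time derivatives it produces by invoking the just-derived PDE in a Lax-Wendroff-style manipulation. Direct evaluation from \eqref{eq:feq} gives $\sum c^2 i^2 \boldsymbol{f}_{ij}^{eq} = \sum c^2 j^2 \boldsymbol{f}_{ij}^{eq} = \tfrac{c^2}{2}\tilde{\boldsymbol{U}}^{num}$ and $\sum c^2 ij\,\boldsymbol{f}_{ij}^{eq}=0$, so the second-order correction reduces to a combination of spatial Laplacians and forcing-derivative terms that cancel exactly the $-\tfrac{\Delta t}{2}\partial_t\tilde{\boldsymbol{B}}$ generated when $\partial_t$ acts on $\sum\boldsymbol{f}_{ij} = \tilde{\boldsymbol{U}}^{num}-\tfrac{\Delta t}{2}\tilde{\boldsymbol{B}}$. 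What makes condition (iv), $\omega=2$, essential is that it eliminates the usual Chapman-Enskog numerical viscosity of order $(1/\omega - 1/2)\Delta t$, which would otherwise perturb \eqref{eq:system} at first order in $\epsilon$; the fourth line of \eqref{eq:cond_eq} is then what prevents an anisotropic $(\partial_x^2 - \partial_y^2)$ residual from surviving the cancellation.

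The expansion just described is an inner expansion of the pointwise numerical solution; promoting it to a global $\mathcal{O}(\epsilon^2)$ estimate on $\tilde{\boldsymbol{U}}^{num} - \tilde{\boldsymbol{U}}$ requires the correctors $\boldsymbol{f}_{ij}^{(k)}$ to be equipped with the correct initial and boundary data for the auxiliary Cauchy problems they solve. On a periodic domain the boundary part is trivial, and condition (v), realized by the initialization \eqref{eq:ic_second}, is precisely what forces $\tilde{\boldsymbol{U}}^{num}(\cdot,0)=\tilde{\boldsymbol{U}}_0+\mathcal{O}(\epsilon^2)$ while making the relevant moments of $\boldsymbol{f}_{ij}^{(1)}(\cdot,0)$ vanish. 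The step I expect to be the main obstacle is the detailed second-order bookkeeping outlined above: tracking every spatial-derivative term produced when the leading-order PDE is substituted back into the $\mathcal{O}(\epsilon)$ correction, and verifying that the combination of D2Q4 isotropy ($\sum c^2 i^2 = \sum c^2 j^2$) with the fourth moment condition of \eqref{eq:cond_eq} is both necessary and sufficient to leave no anisotropic spatial residual at second order.
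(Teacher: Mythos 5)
Your overall route---asymptotic expansion in $\Delta t$ under acoustic scaling, an order-by-order hierarchy, zeroth moments, the half-force shift from condition (i), and condition (v) supplying zero data for the first-order corrector---is the same as the paper's. The genuine gap is in the second-order step that you yourself flag as the main obstacle: the cancellation mechanism you propose is not the right one. Writing $D_{ij}:=\partial_t+\boldsymbol{c}_{ij}\cdot\nabla$, the term $\tfrac12 D_{ij}^2\boldsymbol{f}_{ij}^{(0)}$ in the $\Delta t^2$ line (Eq. \eqref{eq:order2}) does not disappear by substituting the leading-order PDE and invoking lattice isotropy, nor does it cancel against $-\tfrac{\Delta t}{2}\partial_t\Tilde{\boldsymbol{B}}$ (set $\Tilde{\boldsymbol{B}}=\boldsymbol{0}$: the second-derivative terms are still there and something must remove them). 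What removes them is the non-equilibrium part of the first-order corrector: with $\omega=2$ the $\mathcal{O}(\Delta t)$ line gives $\boldsymbol{f}_{ij}^{(1)}=\boldsymbol{f}_{ij}^{(1)eq}-\tfrac12 D_{ij}\boldsymbol{f}_{ij}^{(0)eq}$ (Eq. \eqref{eq:order1_solved}), and inserting this into $D_{ij}\boldsymbol{f}_{ij}^{(1)}$ cancels $\tfrac12 D_{ij}^2\boldsymbol{f}_{ij}^{(0)}$ exactly, \emph{at the population level and before any moments are taken}, leaving $D_{ij}\boldsymbol{f}_{ij}^{(1)eq}=2\bigl(\boldsymbol{f}_{ij}^{(2)eq}-\boldsymbol{f}_{ij}^{(2)}\bigr)$ (Eq. \eqref{eq:order2_solved}). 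In particular the first velocity moment of $\boldsymbol{f}^{(1)}$ is \emph{not} the flux of $\boldsymbol{U}^{(1)}$, so a Lax--Wendroff-style closure that uses only the derived PDE together with the second moments $\sum c^2 i^2\boldsymbol{f}^{eq}=\sum c^2 j^2\boldsymbol{f}^{eq}=\tfrac{c^2}{2}\Tilde{\boldsymbol{U}}^{num}$ leaves uncancelled second-derivative residuals. Correspondingly, your claim that the fourth line of Eq. \eqref{eq:cond_eq} is necessary to kill an anisotropic $(\partial_x^2-\partial_y^2)$ residual is a misattribution: with $\omega=2$ no second-moment condition enters the interior second-order argument at all; that condition merely makes the equilibrium \eqref{eq:feq} unique (and matters for the stability structure), while the entire second-order cancellation comes from $\omega=2$ via the identity above.

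Once that bookkeeping is corrected, the rest of your argument matches the paper: zeroth moments give Eq. \eqref{eq:eqpde1} at first order, using $\sum_{(i,j)\in\mathcal{V}}\bigl(\boldsymbol{f}_{ij}^{(1)eq}-\boldsymbol{f}_{ij}^{(1)}\bigr)=\Tilde{\boldsymbol{B}}/2$ from condition (i), and at second order $\sum_{(i,j)\in\mathcal{V}}\bigl(\boldsymbol{f}_{ij}^{(2)eq}-\boldsymbol{f}_{ij}^{(2)}\bigr)=\boldsymbol{0}$ yields that $\boldsymbol{U}^{(1)}$ satisfies the homogeneous transported system, Eq. \eqref{eq:eqpde2}. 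You should state this last fact explicitly: it is precisely because $\boldsymbol{U}^{(1)}$ solves the homogeneous analogue of \eqref{eq:system} that the zero order-one initial data (and, in the Dirichlet case, boundary data) guaranteed by condition (v) forces $\boldsymbol{U}^{(1)}\equiv\boldsymbol{0}$, which is what upgrades the formal expansion to second-order consistency.
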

\begin{proof}
    We assume the following formal polynomial expansions to hold for $\boldsymbol{f}_{ij}$, $\boldsymbol{f}_{ij}^{eq}$, $\Tilde{\boldsymbol{U}}^{num}$, $\Tilde{\boldsymbol{\Phi}}_x$ and $\Tilde{\boldsymbol{\Phi}}_y$:
    \begin{equation}
    \label{eq:formal}
    \begin{split}
        &\boldsymbol{f}_{ij}=\sum_{k=0}^\infty \Delta t^k \boldsymbol{f}_{ij}^{(k)}, \qquad \boldsymbol{f}_{ij}^{eq}=\sum_{k=0}^\infty \Delta t^k \boldsymbol{f}_{ij}^{(k)eq}\qquad\forall (i,j)\in\mathcal{V} \\
        &\Tilde{\boldsymbol{U}}^{num} = \sum_{k=0}^\infty \Delta t^k\boldsymbol{U}^{(k)},\qquad\Tilde{\boldsymbol{\Phi}}_x(\Tilde{\boldsymbol{U}}^{num}) = \sum_{k=0}^\infty \Delta t^k\Tilde{\boldsymbol{\Phi}}_x\left(\boldsymbol{U}^{(k)}\right),\qquad\Tilde{\boldsymbol{\Phi}}_y(\Tilde{\boldsymbol{U}}^{num}) = \sum_{k=0}^\infty \Delta t^k\Tilde{\boldsymbol{\Phi}}_y\left(\boldsymbol{U}^{(k)}\right),
    \end{split}
    \end{equation}
    where $\boldsymbol{f}_{ij}^{(k)}:\Tilde{\Omega}\times(0,\Tilde{t}_f)\rightarrow\mathbb{R}^5$ for all $k\in\mathbb{N}$. Additionally, we assume that each $\boldsymbol{f}_{ij}^{(k)}$ is sufficiently differentiable in order to carry out the asymptotic expansion up to the desired order (for details refer to \cite{Junk2004,Junk2005,Yang2007}). More precisely, the differentiability requirements only apply to the leading orders (in our case $k=0,1$), whereas higher-order components simply need to be bounded, which will be ensured by the stability established in Proposition \ref{prop:stab_periodic}. By the relations in Eq. \eqref{eq:cond_eq}, the same applies to all other expansion coefficient functions in Eq. \eqref{eq:formal}, where we additionally exploited the linearity of $\Tilde{\boldsymbol{\Phi}}_x$ and $\Tilde{\boldsymbol{\Phi}}_y$.
    
    Next, we introduce the formal expansion into the lattice Boltzmann equation, i.\,e. Eq. \eqref{eq:lbe}, and Taylor expand the coefficient functions around some arbitrary $(\boldsymbol{x},t)\in\Tilde{\Omega}\times(0,\Tilde{t}_f)$. In this step, we further make use of the acoustic scaling assumption (iii), i.\,e. $c=\Delta x/\Delta t = const$. After sorting by powers of $\Delta t$, we obtain in the first three orders:
    \begin{alignat}{4}
    \label{eq:order0}
        &\Delta t^0:\quad&\boldsymbol{0}&=\omega\left(\boldsymbol{f}_{ij}^{(0)eq}-\boldsymbol{f}_{ij}^{(0)}\right) \\
        \label{eq:order1}
        &\Delta t^1:\quad&\partial_t\boldsymbol{f}_{ij}^{(0)} + \boldsymbol{c}_{ij}\cdot\nabla\boldsymbol{f}_{ij}^{(0)}&=\omega\left(\boldsymbol{f}_{ij}^{(1)eq}-\boldsymbol{f}_{ij}^{(1)}\right) + (2-\omega)W_{ij}\Tilde{\boldsymbol{B}} \\
        \label{eq:order2}
        &\Delta t^2:\quad&\partial_t\boldsymbol{f}_{ij}^{(1)} + \boldsymbol{c}_{ij}\cdot\nabla\boldsymbol{f}_{ij}^{(1)}+\frac{1}{2}\partial^2_t\boldsymbol{f}_{ij}^{(0)} + \boldsymbol{c}_{ij}\cdot\partial_t\nabla\boldsymbol{f}_{ij}^{(0)}+ \frac{1}{2}\boldsymbol{c}_{ij}\otimes\boldsymbol{c}_{ij}\cdot\nabla\otimes\nabla\boldsymbol{f}_{ij}^{(0)}&=\omega\left(\boldsymbol{f}_{ij}^{(2)eq}-\boldsymbol{f}_{ij}^{(2)}\right),
    \end{alignat}
    where we omitted the argument $(\boldsymbol{x},t)$ for all functions. From Eqs. \eqref{eq:order0} and \eqref{eq:order1} we obtain
    \begin{align}
    \label{eq:order0_solved}
        \boldsymbol{f}_{ij}^{(0)}&=\boldsymbol{f}_{ij}^{(0)eq} \\
        \label{eq:order1_solved}
        \boldsymbol{f}_{ij}^{(1)}&=\boldsymbol{f}_{ij}^{(1)eq}-\frac{1}{2}\left( \partial_t\boldsymbol{f}_{ij}^{(0)eq} + \boldsymbol{c}_{ij}\cdot\nabla\boldsymbol{f}_{ij}^{(0)eq}\right),
    \end{align}
    where we applied (iv), i.\,e. $\omega=2$. If we substitute Eqs. \eqref{eq:order0_solved} and \eqref{eq:order1_solved} into Eq. \eqref{eq:order2} and with $\omega=2$, we obtain
    \begin{equation}
    \label{eq:order2_solved}
        \partial_t\boldsymbol{f}_{ij}^{(1)eq} + \boldsymbol{c}_{ij}\cdot\nabla\boldsymbol{f}_{ij}^{(1)eq} =2\left(\boldsymbol{f}_{ij}^{(2)eq}-\boldsymbol{f}_{ij}^{(2)}\right).
    \end{equation}
    Finally, taking zeroth-order moments of Eq. \eqref{eq:order1} (considering Eq.\eqref{eq:order0_solved}) and Eq. \eqref{eq:order2_solved}, i.\,e. summing over all populations, yields the following results:
    \begin{alignat}{4}
    \label{eq:eqpde1}
        &\Delta t^1:\quad&\partial_t\boldsymbol{U}^{(0)}+\partial_x \Tilde{\boldsymbol{\Phi}}_x\left(\boldsymbol{U}^{(0)}\right)+\partial_y \Tilde{\boldsymbol{\Phi}}_y\left(\boldsymbol{U}^{(0)}\right) &=\Tilde{\boldsymbol{B}} \\
        \label{eq:eqpde2}
        &\Delta t^2:\quad&\partial_t\boldsymbol{U}^{(1)}+\partial_x \Tilde{\boldsymbol{\Phi}}_x\left(\boldsymbol{U}^{(1)}\right) + \partial_y \Tilde{\boldsymbol{\Phi}}_y\left(\boldsymbol{U}^{(1)}\right) &=\boldsymbol{0},
    \end{alignat}
    where we used (ii) and (iv). Also note that, due to (i), it is $\displaystyle\sum_{(i,j)\in\mathcal{V}} \left(\boldsymbol{f}_{ij}^{(1)eq}-\boldsymbol{f}_{ij}^{(1)}\right)=\Tilde{\boldsymbol{B}}/2$.
    
    In summary, we see from Eq. \eqref{eq:eqpde1} that the leading-order expansion coefficient of the numerical approximation of the primary solution vector $\boldsymbol{U}^{(0)}$ indeed satisfies Eq. \eqref{eq:system}. If we combine this with (v), by which we assume consistent initial and boundary conditions at leading order, we obtain a formulation that is consistent with the target problem. Additionally, the leading-order error contribution $\boldsymbol{U}^{(1)}$ is the solution of the homogeneous target equation, which admits a zero solution if we supply zero initial and boundary conditions at this order. Again, this is provided by (v), which lets us conclude second-order consistency in the domain interior.
\end{proof}
\subsection{Second-order consistent initialization} \label{sec:ic}
 
As follows, we show that for the lattice Boltzmann scheme satisfying (i)-(iv) of Proposition \ref{prop:consistency_interior}, we obtain a second-order consistent initialization (in the sense that $\boldsymbol{U}^{(0)}=\Tilde{\boldsymbol{U}}_0$ and $\boldsymbol{U}^{(1)}=\boldsymbol{0})$ if we set $\boldsymbol{f}_{ij}$ as in Eq. \eqref{eq:ic_second}.
    Let us first express the first two leading orders of the formal asymptotic expansion of the populations explicitly in terms of the primary solution quantity. To this end, we start with Eqs. \eqref{eq:order0_solved} and \eqref{eq:order1_solved} and introduce the equilibrium definition of Eq. \eqref{eq:feq}. This leads to the following results for the expansion coefficients of $\boldsymbol{f}_{ij}$:
    \begin{alignat}{4}
        \label{eq:asymp_ic0}
        &\Delta t^0:\quad\boldsymbol{f}_{ij}^{(0)} = &&\frac{1}{4}\left[\boldsymbol{U}^{(0)}+\frac{2}{c}\left(i\Tilde{\boldsymbol{\Phi}}_x\left(\boldsymbol{U}^{(0)}\right) +j\Tilde{\boldsymbol{\Phi}}_y\left(\boldsymbol{U}^{(0)}\right)\right)\right] \\
        \label{eq:asymp_ic1}
        &\Delta t^1:\quad \boldsymbol{f}_{ij}^{(1)} = &&\frac{1}{4}\left[\boldsymbol{U}^{(1)}+\frac{2}{c}\left(i\Tilde{\boldsymbol{\Phi}}_x\left(\boldsymbol{U}^{(1)}\right) +j\Tilde{\boldsymbol{\Phi}}_y\left(\boldsymbol{U}^{(1)}\right)\right)\right] -\frac{1}{8}\left[\partial_t\boldsymbol{U}^{(0)}+\frac{2}{c}\left(i\Tilde{\boldsymbol{\Phi}}_x\left(\partial_t\boldsymbol{U}^{(0)}\right) +j\Tilde{\boldsymbol{\Phi}}_y\left(\partial_t\boldsymbol{U}^{(0)}\right)\right)\right] \\\nonumber
        &&& -\frac{c}{8} \left[i\partial_x \boldsymbol{U}^{(0)}+\frac{2}{c}i^2\Tilde{\boldsymbol{\Phi}}_x\left(\partial_x\boldsymbol{U}^{(0)}\right) + j\partial_y \boldsymbol{U}^{(0)}+\frac{2}{c}j^2\Tilde{\boldsymbol{\Phi}}_y\left(\partial_y\boldsymbol{U}^{(0)}\right) \right].
    \end{alignat}
    Note that $\partial_t \boldsymbol{U}^{(0)}$ involves up to second-order time derivatives of the displacement solution, for which no initial condition exists. To overcome this issue, we apply the target equation, i.\,e. Eq. \eqref{eq:eqpde1}, to turn the time derivative in Eq. \eqref{eq:asymp_ic1} into known (or computable) space derivatives of the displacement and velocity initial conditions, respectively:
    \begin{equation}
    \label{eq:asymp_ic1_mod}
    \begin{split}
        \boldsymbol{f}_{ij}^{(1)} = &\frac{1}{4}\left[\boldsymbol{U}^{(1)}+\frac{2}{c}\left(i\Tilde{\boldsymbol{\Phi}}_x\left(\boldsymbol{U}^{(1)}\right) +j\Tilde{\boldsymbol{\Phi}}_y\left(\boldsymbol{U}^{(1)}\right)\right)\right] -\frac{1}{8}\left[\Tilde{\boldsymbol{B}}+\frac{2}{c}\left(i\Tilde{\boldsymbol{\Phi}}_x\left(\Tilde{\boldsymbol{B}}\right) +j\Tilde{\boldsymbol{\Phi}}_y\left(\Tilde{\boldsymbol{B}}\right)\right)\right] \\
        &-\frac{c}{8}\left[ i\partial_x \boldsymbol{U}^{(0)} + \frac{2i^2-1}{c}\Tilde{\boldsymbol{\Phi}}_x\left(\partial_x\boldsymbol{U}^{(0)}\right) - \frac{2}{c^2}\left(i\Tilde{\boldsymbol{\Phi}}_x\left(\Tilde{\boldsymbol{\Phi}}_x\left(\partial_x\boldsymbol{U}^{(0)}\right)\right)+j\Tilde{\boldsymbol{\Phi}}_y\left(\Tilde{\boldsymbol{\Phi}}_x\left(\partial_x\boldsymbol{U}^{(0)}\right)\right)\right)\right]\\
        &-\frac{c}{8}\left[ j \partial_y \boldsymbol{U}^{(0)} + \frac{2j^2-1}{c}\Tilde{\boldsymbol{\Phi}}_y\left(\partial_y\boldsymbol{U}^{(0)}\right) - \frac{2}{c^2}\left(i\Tilde{\boldsymbol{\Phi}}_x\left(\Tilde{\boldsymbol{\Phi}}_y\left(\partial_y\boldsymbol{U}^{(0)}\right)\right)+j\Tilde{\boldsymbol{\Phi}}_y\left(\Tilde{\boldsymbol{\Phi}}_y\left(\partial_y\boldsymbol{U}^{(0)}\right)\right)\right)\right].
    \end{split}
    \end{equation}
    Finally, we can combine the results of Eqs. \eqref{eq:asymp_ic0} and \eqref{eq:asymp_ic1_mod}, i.\,e. $\boldsymbol{f}_{ij}(\boldsymbol{x},0) = \boldsymbol{f}_{ij}^{(0)}(\boldsymbol{x},0)+\Delta t \boldsymbol{f}_{ij}^{(1)}(\boldsymbol{x},0) + \mathcal{O}\left(\Delta t^2\right)$ and introduce the expansion into the left-hand side of Eq. \eqref{eq:ic_second}. For arbitrary $\Delta t>0$, the equation is satisfied if and only if $\boldsymbol{U}^{(0)}(\boldsymbol{x},0)=\Tilde{\boldsymbol{U}}_0(\boldsymbol{x})$ and $\boldsymbol{U}^{(1)}(\boldsymbol{x},0)=\boldsymbol{0}$ for all $\boldsymbol{x}\in\mathbb{G}$.

\subsection{Second-order consistent boundary formulation} \label{sec:bc_dirichlet}
In the presence of the Dirichlet boundary condition in Eq. \eqref{eq:bc_vector}, in order to obtain a second-order consistent boundary formulation, we require a consistent behavior of the boundary formulation at leading order, while prescribing a homogeneous boundary condition at the next higher order.

\begin{prop}[Second-order consistent Dirichlet boundary formulation] \label{prop:consistency_dbc}
    For the lattice Boltzmann scheme satisfying (i)-(iv) of Proposition \ref{prop:consistency_interior}, the  Dirichlet boundary formulation in Eqs. \eqref{eq:bc_ansatz}-\eqref{eq:bc_q} is second-order consistent, in the sense that $\boldsymbol{\Psi}\left(\boldsymbol{U}^{(0)}\right)=\boldsymbol{S}_{bc}\partial_t\Tilde{\boldsymbol{u}}_D$ and $\boldsymbol{\Psi}\left(\boldsymbol{U}^{(1)}\right)=\boldsymbol{0}$ on $\partial\Tilde{\Omega}\times(0,\Tilde{t}_f)$ (see Eq. \eqref{eq:bc_vector}).
\end{prop}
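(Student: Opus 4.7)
The approach is to mirror the initialization analysis of Section~\ref{sec:ic}: substitute the formal expansion (\ref{eq:formal}) into the boundary update (\ref{eq:bc_ansatz}), Taylor expand each coefficient function around the physical boundary point at the half-step time $(\boldsymbol{x}_b,\,t+\Delta t/2)$, and match orders in $\Delta t$. Since $\omega=2$, collision reads $\boldsymbol{f}^*_{\bar{\imath}\bar{\jmath}} = 2\boldsymbol{f}^{eq}_{\bar{\imath}\bar{\jmath}} - \boldsymbol{f}_{\bar{\imath}\bar{\jmath}}$, and the relations (\ref{eq:order0_solved})--(\ref{eq:order1_solved}) together with the explicit equilibrium (\ref{eq:feq}) give $\boldsymbol{f}^{(0)}_{\bar{\imath}\bar{\jmath}}$ and $\boldsymbol{f}^{(1)}_{\bar{\imath}\bar{\jmath}}$ in closed form as linear combinations of $\boldsymbol{U}^{(0)}$, $\boldsymbol{U}^{(1)}$ and their space--time derivatives. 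The choice $q_{ij}=1/2$ places $\boldsymbol{x} = \boldsymbol{x}_b + \tfrac{\Delta t}{2}\boldsymbol{c}_{ij}$, so the LHS argument $(\boldsymbol{x},\,t+\Delta t)$ and the RHS argument $(\boldsymbol{x},\,t)$ sit symmetrically at $\pm\tfrac{\Delta t}{2}$ in time around the expansion center while sharing the same spatial shift; this symmetry is what drives the cancellations at the first sub-leading order.

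At order $\Delta t^0$ the boundary rule reduces to the algebraic identity $\boldsymbol{f}^{(0)eq}_{ij} - \boldsymbol{D}\boldsymbol{f}^{(0)eq}_{\bar{\imath}\bar{\jmath}} = \boldsymbol{S}_{ij}\,\partial_t\tilde{\boldsymbol{u}}_D$ at $(\boldsymbol{x}_b,\,t+\Delta t/2)$. Decomposing the equilibrium (\ref{eq:feq}) into its $(i,j)$-even piece $\tfrac{1}{4}\boldsymbol{U}^{(0)}$ and its $(i,j)$-odd piece $\tfrac{1}{2c}(i\tilde{\boldsymbol{\Phi}}_x(\boldsymbol{U}^{(0)}) + j\tilde{\boldsymbol{\Phi}}_y(\boldsymbol{U}^{(0)}))$ makes the combination with $\boldsymbol{D}$ transparent: the anti-bounce-back block (first two entries of $\boldsymbol{D}$, value $-1$) picks out twice the even moment piece $\boldsymbol{I}_u\boldsymbol{U}^{(0)}$ and annihilates the odd flux piece, whereas the bounce-back block (last three entries, value $+1$) does the opposite. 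For the rectangular boundaries considered every $(i,j)\in\mathcal{D}_{\boldsymbol{x}}$ is parallel to the outward normal $(n_x,n_y)$, and matching the surviving contributions entry-by-entry against the block structure of $\boldsymbol{\Psi}$ in (\ref{eq:bc_vector}) and the chosen coefficients of $\boldsymbol{S}_{ij}$ in (\ref{eq:bc_s}) recovers exactly $\boldsymbol{\Psi}(\boldsymbol{U}^{(0)}) = \boldsymbol{S}_{bc}\,\partial_t\tilde{\boldsymbol{u}}_D$.

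At order $\Delta t^1$ three groups of contributions appear: the $\boldsymbol{U}^{(1)}$-analogue $\boldsymbol{f}^{(1)eq}_{ij} - \boldsymbol{D}\boldsymbol{f}^{(1)eq}_{\bar{\imath}\bar{\jmath}}$ of the order-zero expression, the first-order Taylor shifts of the leading-order equilibria on both sides (with the bounce-back side doubled by the factor $2$ in $\boldsymbol{f}^*_{\bar{\imath}\bar{\jmath}} = 2\boldsymbol{f}^{eq}_{\bar{\imath}\bar{\jmath}} - \boldsymbol{f}_{\bar{\imath}\bar{\jmath}}$), and the derivative correction inside $\boldsymbol{f}^{(1)}_{\bar{\imath}\bar{\jmath}}$ coming from (\ref{eq:order1_solved}). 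The assumption $q_{ij}=1/2$ together with the $(i,j)$-parity of $\boldsymbol{f}^{(0)eq}$ forces all space--time derivatives of $\boldsymbol{f}^{(0)eq}$ to cancel pairwise, any residual $\partial_t\boldsymbol{U}^{(0)}$ being traded for spatial derivatives via the leading-order PDE (\ref{eq:eqpde1}) exactly as in Section~\ref{sec:ic}. Since $\boldsymbol{S}_{ij}$ carries no $\Delta t$-dependence, the order-zero algebra then applies to $\boldsymbol{U}^{(1)}$ against a vanishing right-hand side, yielding $\boldsymbol{\Psi}(\boldsymbol{U}^{(1)}) = \boldsymbol{0}$. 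The main obstacle is precisely this order-$\Delta t^1$ bookkeeping: the half-grid offset, the block form of $\boldsymbol{D}=\diag(-1,-1,1,1,1)$, the $(i,j)$-parity of the equilibrium, and the coupling structure of $\tilde{\boldsymbol{\Phi}}_x, \tilde{\boldsymbol{\Phi}}_y$ are each elementary in isolation but must conspire exactly for the derivative pieces to cancel and for the surviving pieces to collapse onto the two blocks of $\boldsymbol{\Psi}$; building the even/odd decomposition into the expansion from the outset and treating the anti-bounce-back and bounce-back blocks as independent scalar identities is what keeps the algebra tractable.
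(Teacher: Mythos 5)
Your proposal is correct and follows essentially the same route as the paper: asymptotically expand the populations, Taylor-expand both sides of the boundary rule about $(\boldsymbol{x}_b,\,t+\Delta t/2)$, exploit the anti-bounce-back/bounce-back block structure of $\boldsymbol{D}$ (your even/odd decomposition of the equilibrium is exactly the paper's separate treatment of the two cases) at leading order, and invoke $q_{ij}=1/2$ to cancel the first-order derivative residuals. The only cosmetic slip is that the missing incoming indices satisfy $(i,j)=-(n_x,n_y)$, i.e.\ they are anti-parallel to the outward normal, which is precisely the sign needed to match $\boldsymbol{S}_{ij}$ against $\boldsymbol{S}_{bc}$.
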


\begin{proof}
    In order to verify the conditions of a second-order consistent Dirichlet boundary formulation, we reuse the asymptotic expansion results from the domain interior, i.\,e. the results shown in Eqs. \eqref{eq:asymp_ic0} and \eqref{eq:asymp_ic1}. Note that $\boldsymbol{f}_{ij}^{(0)*}=\boldsymbol{f}_{ij}^{(0)}$ and $\boldsymbol{f}_{ij}^{(1)*}$ is obtained by inverting the sign of the latter two terms of $\boldsymbol{f}_{ij}^{(1)}$ in Eq. \eqref{eq:asymp_ic1}. For simplicity, let us first analyze the boundary formulation with only bounce-back or anti bounce-back, i.\,e. $\boldsymbol{D}=\boldsymbol{I}_n$ or $\boldsymbol{D}=-\boldsymbol{I}_n$, respectively, with $\boldsymbol{I}_n$ as the identity matrix in $n$ dimensions. If we introduce the expansion coefficients for the pre- and post-collision populations into the ansatz and Taylor expand around $(\boldsymbol{x}_b,t+\Delta t/2)$, we obtain for the two leading orders
    \begin{alignat}{4}
        \label{eq:abb0}
        &\text{Anti bounce-back}\qquad&\Delta t^0:\quad&\frac{1}{2}\boldsymbol{U}^{(0)} - \boldsymbol{S}_{ij} = \boldsymbol{0} \\
        \label{eq:abb1}
        &&\Delta t^1:\quad&\frac{1}{2}\boldsymbol{U}^{(1)} + \frac{1}{8}\left(q_{ij}-\frac{1}{2}\right)\left(i\partial_x \boldsymbol{U}^{(0)}+ j\partial_y \boldsymbol{U}^{(0)}\right) = \boldsymbol{0} \\
        \label{eq:bb0}
        &\text{Bounce-back}\qquad&\Delta t^0:\quad&i \Tilde{\Phi}_x\left(\boldsymbol{U}^{(0)}\right)+j \Tilde{\Phi}_y\left(\boldsymbol{U}^{(0)}\right) - \boldsymbol{S}_{ij} = \boldsymbol{0} \\
        \label{eq:bb1}
        &&\Delta t^1:\quad&i \Tilde{\Phi}_x\left(\boldsymbol{U}^{(1)}\right)+j \Tilde{\Phi}_y\left(\boldsymbol{U}^{(1)}\right) + \frac{1}{4}\left(q_{ij}-\frac{1}{2}\right)\left(i^2 \Tilde{\Phi}_x\left(\partial_x\boldsymbol{U}^{(0)}\right)+j^2 \Tilde{\Phi}_y\left(\partial_y\boldsymbol{U}^{(0)}\right)\right) = \boldsymbol{0},
    \end{alignat}
    where all functions are evaluated at $(\boldsymbol{x}_b,t+\Delta t/2)$. Combining the leading-order expressions for the pure anti bounce-back and bounce-back case (Eqs. \eqref{eq:abb0} and \eqref{eq:bb0}) to the mixed handling with $\boldsymbol{D}$ as defined in Eq. \eqref{eq:bc_d}, we obtain
    \begin{equation}
        \frac{1}{2}\boldsymbol{I}_u\boldsymbol{U}^{(0)} + i\boldsymbol{I}_{\Phi}\Tilde{\Phi}_x\left(\boldsymbol{U}^{(0)}\right) + j\boldsymbol{I}_{\Phi}\Tilde{\Phi}_y\left(\boldsymbol{U}^{(0)}\right) = \left[\begin{array}{rr}
    1/2 & 0 \\ 0 & 1/2 \\ i\Tilde{c}_K & j \Tilde{c}_K \\ i\Tilde{c}_{\mu} & -j\Tilde{c}_{\mu} \\ j\Tilde{c}_{\mu} & i\Tilde{c}_{\mu}
    \end{array}\right]\partial_t \Tilde{\boldsymbol{u}}_D,
    \end{equation}
    where we introduced the definition of $\boldsymbol{S}_{ij}$ according to Eq. \eqref{eq:bc_s}. For rectangular domains and the D2Q4$^5$ velocity set, it is straightforward to recognize that $i=-n_x$ and $j=-n_y$ on the boundary, so that the leading-order expansion satisfies the dimensionless form of Eq. \eqref{eq:bc_vector}. Continuing to the next order and combining the expansion results of Eqs. \eqref{eq:abb1} and \eqref{eq:bb1} yields
    \begin{equation}
        \frac{1}{2}\boldsymbol{I}_u\boldsymbol{U}^{(1)} + i\boldsymbol{I}_{\Phi}\Tilde{\Phi}_x\left(\boldsymbol{U}^{(1)}\right) + j\boldsymbol{I}_{\Phi}\Tilde{\Phi}_y\left(\boldsymbol{U}^{(1)}\right) = \boldsymbol{0},
    \end{equation}
    where we further applied Eq. \eqref{eq:bc_q}, i.\,e. $q_{ij}=\frac{1}{2}$ for all $(i,j)\in\mathcal{D}_{\boldsymbol{x}}$. Accordingly, we obtain a zero boundary condition for the next higher expansion coefficient and therefore the Dirichlet boundary formulation is second-order consistent.
\end{proof}

\section{Stability of the proposed method} \label{sec:stability}

Clearly, the removal of numerical dissipation makes the stability of the formulation a delicate issue. In this section, we analyze and demonstrate the stability of the proposed method. To this end, we perform a rigorous stability analysis based on the notion of pre-stability structures proposed in \cite{Banda2006,Junk2009,Rheinlander2010} and recently generalized towards vectorial LBM in \cite{Zhao2020,Zhao2020s,Zhao2024}. Additionally, we extend the stability result towards the case of Dirichlet problems by a modified proof for the streaming stage involving the boundary formulation, which is based on \cite{Junk2009,Zhao2020bc,Zhao2024}.

\subsection{Preliminaries}
\label{prelim}
Let $\boldsymbol{f}:=\left[\begin{matrix}\boldsymbol{f}^T_{10} & \boldsymbol{f}^T_{01} & \boldsymbol{f}^T_{\Bar{1}0} & \boldsymbol{f}^T_{0\Bar{1}}\end{matrix}\right]^T$ be the column vector of all vector-valued populations $\boldsymbol{f}:\mathbb{G}\times\mathbb{T}\rightarrow\mathbb{R}^{qn}$, where $n$ is the number of scalar components and $q$ the number of populations, i.\,e. $q:=|\mathcal{V}|$. We use Miller index notation to denote $-1$ as $\Bar{1}$. Using this population vector notation, we can rewrite the collision step  in Eq. \eqref{eq:collision} without forcing as follows
    \begin{equation}
    \label{eq:collision_vector}
        \boldsymbol{f}^* = \left(\boldsymbol{I}_{qn} + \omega(\boldsymbol{G}-\boldsymbol{I}_{qn})\right)\boldsymbol{f} =: (\boldsymbol{I}_{qn}+\boldsymbol{J})\boldsymbol{f},
    \end{equation}
    where $\boldsymbol{I}_{qn}\in\mathbb{R}^{qn \times qn}$ is the identity matrix and $\boldsymbol{G}\in\mathbb{R}^{qn \times qn}$ transforms the population vector into its equilibrium population vector according to Eq. \eqref{eq:feq}, i.\,e. $\boldsymbol{f}^{eq}:=\left[\begin{matrix}\boldsymbol{f}_{10}^{eq,T} & \boldsymbol{f}_{01}^{eq,T} & \boldsymbol{f}_{\Bar{1}0}^{eq,T} & \boldsymbol{f}_{0\Bar{1}}^{eq,T}\end{matrix}\right]^T = \boldsymbol{G}\boldsymbol{f}$. In our specific case, we have $q=4$ and $n=5$, according to D2Q4$^5$. Based on the definitions for $\Tilde{\boldsymbol{U}}^{num}$ in Eq. \eqref{eq:cond_eq} and the equilibrium populations in Eq. \eqref{eq:feq}, we can write $\boldsymbol{G}$ as follows:
    \begin{equation}
    \label{eq:stability_linear}
    \begin{split}
        &\boldsymbol{G} = \left[\begin{matrix}
            \boldsymbol{g}_{10} \\ \boldsymbol{g}_{01} \\ \boldsymbol{g}_{\Bar{1}0} \\ \boldsymbol{g}_{0\Bar{1}}
        \end{matrix}\right] \left[\begin{matrix}
            \boldsymbol{I}_n & \boldsymbol{I}_n & \boldsymbol{I}_n & \boldsymbol{I}_n
        \end{matrix}\right], \qquad \boldsymbol{g}_{ij} := \frac{1}{4}\boldsymbol{I}_n + \frac{1}{2c}(i\boldsymbol{A}_x + j\boldsymbol{A}_y) \qquad \forall (i,j)\in\mathcal{V} \\
        &\boldsymbol{A}_x := 
        \left[\begin{matrix}
            0 & 0 & \Tilde{c}_K & \Tilde{c}_{\mu} & 0 \\ 0 & 0 & 0 & 0 & \Tilde{c}_{\mu} \\ \Tilde{c}_K & 0 & 0 & 0 & 0 \\ \Tilde{c}_{\mu} & 0 & 0 & 0 & 0 \\ 0 & \Tilde{c}_{\mu} & 0 & 0 & 0
        \end{matrix}\right] \qquad \boldsymbol{A}_y := 
        \left[\begin{matrix}
            0 & 0 & 0 & 0 & \Tilde{c}_{\mu} \\ 0 & 0 & \Tilde{c}_K & -\Tilde{c}_{\mu} & 0 \\ 0 & \Tilde{c}_K & 0 & 0 & 0 \\ 0 & -\Tilde{c}_{\mu} & 0 & 0 & 0 \\ \Tilde{c}_{\mu} & 0 & 0 & 0 & 0
        \end{matrix}\right].
    \end{split}
    \end{equation}
    By the specific choice of the primary solution vector $\boldsymbol{U}$ in Eq. \eqref{eq:system}, we obtain symmetric matrices $\boldsymbol{A}_x$ and $\boldsymbol{A}_y$, which in turn lead to symmetric $\boldsymbol{g}_{ij},\; \forall (i,j)\in\mathcal{V}$. 

\subsection{Stability of vectorial LBM for elastodynamics with periodic boundaries}

We start by introducing the definition of pre-stability structures, which follows \cite{Banda2006,Junk2009,Rheinlander2010} and its generalization for vectorial LBM in \cite{Zhao2020s,Zhao2024}. 

\begin{definition}[Pre-stability structure] \label{def:prestability}
    The square matrix $\boldsymbol{J}\in\mathbb{R}^{qn\times qn}$ for some positive natural numbers $q,n$ is said to have a pre-stability structure if there exists an invertible matrix $\boldsymbol{P}\in\mathbb{R}^{qn\times qn}$ such that
    \begin{enumerate}[(i)]
        \item $\boldsymbol{P}\boldsymbol{J} = -\diag{(\lambda_1,\dots,\lambda_{qn})}\boldsymbol{P}$,
        \item $\boldsymbol{P}^T\boldsymbol{P} = \diag{(\boldsymbol{p}_1,\dots,\boldsymbol{p}_q)}$,
    \end{enumerate}
    with $\lambda_i\in\mathbb{R}$ for all $i=1,\dots,qn$ and symmetric positive definite matrices $\boldsymbol{p}_j\in\mathbb{R}^{n\times n}$ for all $j=1,\dots,q$.
\end{definition}

Comparing with the definition in Eq. \eqref{eq:collision_vector}, we recognize that $\boldsymbol{J}$ is the matrix representation of the (linear) collision operation, where $q$ corresponds to the number of populations and $n$ to the number of scalar components of each population.

Further, let us introduce a weighted L2 grid norm, which involves the invertible weighting matrix $\boldsymbol{P}$ already introduced in Definition \ref{def:prestability} and the vector of populations $\boldsymbol{f}$. The stability of the collision and streaming stages of LBM will be established using this norm.

\begin{definition}[Weighted L2 grid norm] \label{def:grid_norm}
     Let $\boldsymbol{P}\in\mathbb{R}^{qn\times qn}$ be a matrix satisfying all conditions in Definition \ref{def:prestability}. Then we define the weighted L2 grid norm of the population vector $\boldsymbol{f}$ as
    \begin{equation}
        ||\boldsymbol{f}||^2_{\boldsymbol{P}}(t) := \sum_{\boldsymbol{x}\in\mathbb{G}} |\boldsymbol{P}\boldsymbol{f}(\boldsymbol{x},t)|^2,
    \end{equation}
    where $|\bullet|$ denotes the Euclidean norm. We can also express the weighted L2 grid norm as
    \begin{equation}
        ||\boldsymbol{f}||^2_{\boldsymbol{P}}(t) = \sum_{\boldsymbol{x}\in\mathbb{G}} |\boldsymbol{P}\boldsymbol{f}(\boldsymbol{x},t)|^2 = \sum_{\boldsymbol{x}\in\mathbb{G}} \boldsymbol{f}^T\boldsymbol{P}^T\boldsymbol{P}\boldsymbol{f} = \sum_{\boldsymbol{x}\in\mathbb{G}} \sum_{(i,j)\in\mathcal{V}} \boldsymbol{f}_{ij}^T \boldsymbol{k}_{ij} \boldsymbol{f}_{ij} =: \sum_{\boldsymbol{x}\in\mathbb{G}} \sum_{(i,j)\in\mathcal{V}} |\boldsymbol{f}_{ij}(\boldsymbol{x},t)|^2_{\boldsymbol{k}_{ij}},
    \end{equation}
    introducing $q$ symmetric positive definite matrices $\boldsymbol{k}_{ij}\in\mathbb{R}^{n\times n}$ for all $(i,j)\in\mathcal{V}$, which induce well-defined norms, because of (ii) in Definition \ref{def:prestability}.
\end{definition}

Note that, instead of using the matrices $\boldsymbol{p}_j$ in Definition \ref{def:prestability}, we now refer to the symmetric positive definite diagonal blocks as $\boldsymbol{k}_{ij}$, which allows us to write the summation over the velocity index set $\mathcal{V}$.

In order to construct a weighting matrix $\boldsymbol{P}$ that satisfies the conditions in Definition \ref{def:prestability}, we further introduce the following lemma, that provides a more easily applicable condition for the construction of such a matrix. The statement and the proof are adopted from \cite{Zhao2020s,Zhao2024}, however, we attempt to add more detailed explanations.

\begin{lemma} \label{lem:1}
    Let $\boldsymbol{K}\in\mathbb{R}^{qn\times qn}$ be a block-diagonal matrix consisting of a number $q$ of symmetric positive definite blocks $\boldsymbol{k}_{ij}\in\mathbb{R}^{n\times n}$ for all $(i,j)\in\mathcal{V}$. If $\boldsymbol{K}$ is a symmetrizer of $\boldsymbol{J}\in\mathbb{R}^{qn\times qn}$, i.\,e. $\boldsymbol{K}\boldsymbol{J} = \boldsymbol{J}^T\boldsymbol{K}$ (i.\,e. $\boldsymbol{K}\boldsymbol{J}$ is symmetric), then $\boldsymbol{J}$ admits a pre-stability structure in the sense of Definition \ref{def:prestability}.
\end{lemma}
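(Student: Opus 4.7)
The plan is to construct $\boldsymbol{P}$ as the composition of a symmetric square root of $\boldsymbol{K}$ followed by an orthogonal diagonalizer, so that both requirements of Definition \ref{def:prestability} fall out of the symmetrizer hypothesis and the spectral theorem for real symmetric matrices.

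First I would observe that $\boldsymbol{K}=\diag(\boldsymbol{k}_{10},\boldsymbol{k}_{01},\boldsymbol{k}_{\bar 10},\boldsymbol{k}_{0\bar 1})$ is itself symmetric positive definite, since each diagonal block is, and therefore admits a unique symmetric positive definite square root $\boldsymbol{L}:=\boldsymbol{K}^{1/2}$ with $\boldsymbol{L}^T\boldsymbol{L}=\boldsymbol{L}^2=\boldsymbol{K}$ and $\boldsymbol{L}$ invertible. The crucial step is then to show that the symmetrizer identity $\boldsymbol{K}\boldsymbol{J}=\boldsymbol{J}^T\boldsymbol{K}$ translates into symmetry of the conjugate matrix $\boldsymbol{M}:=\boldsymbol{L}\boldsymbol{J}\boldsymbol{L}^{-1}$. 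Indeed, using $\boldsymbol{L}^T=\boldsymbol{L}$ one computes $\boldsymbol{M}^T=\boldsymbol{L}^{-1}\boldsymbol{J}^T\boldsymbol{L}$, and the identity $\boldsymbol{M}^T=\boldsymbol{M}$ is equivalent, after multiplying by $\boldsymbol{L}$ on both sides, to $\boldsymbol{L}^2\boldsymbol{J}=\boldsymbol{J}^T\boldsymbol{L}^2$, which is exactly the hypothesis $\boldsymbol{K}\boldsymbol{J}=\boldsymbol{J}^T\boldsymbol{K}$.

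With $\boldsymbol{M}$ real symmetric, the spectral theorem supplies an orthogonal matrix $\boldsymbol{Q}$ and real numbers $\lambda_1,\dots,\lambda_{qn}$ such that $\boldsymbol{Q}^T\boldsymbol{M}\boldsymbol{Q}=-\diag(\lambda_1,\dots,\lambda_{qn})$ (the sign is absorbed by renaming the eigenvalues). Setting $\boldsymbol{P}:=\boldsymbol{Q}^T\boldsymbol{L}$, which is invertible as a product of invertible factors, condition (i) of Definition \ref{def:prestability} follows from $\boldsymbol{P}\boldsymbol{J}=\boldsymbol{Q}^T\boldsymbol{M}\boldsymbol{L}=-\diag(\lambda_1,\dots,\lambda_{qn})\boldsymbol{Q}^T\boldsymbol{L}=-\diag(\lambda_1,\dots,\lambda_{qn})\boldsymbol{P}$, while condition (ii) is immediate from orthogonality of $\boldsymbol{Q}$, since $\boldsymbol{P}^T\boldsymbol{P}=\boldsymbol{L}^T\boldsymbol{Q}\boldsymbol{Q}^T\boldsymbol{L}=\boldsymbol{L}^T\boldsymbol{L}=\boldsymbol{K}$, which is block-diagonal with the SPD blocks $\boldsymbol{k}_{ij}$ by hypothesis.

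The only genuinely delicate point in this argument is the first one: recognizing that the symmetrizer identity is precisely the condition required to conjugate $\boldsymbol{J}$ into a real symmetric matrix, which is what guarantees a real spectrum and an orthogonal diagonalization. Once this symmetry is established, the construction of $\boldsymbol{P}$ and the verification of both properties in Definition \ref{def:prestability} are standard consequences of the spectral theorem, and the block-diagonal structure demanded by (ii) is inherited directly from $\boldsymbol{K}$ without any additional work.
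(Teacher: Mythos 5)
Your proof is correct and follows essentially the same route as the paper: conjugate $\boldsymbol{J}$ by the symmetric positive definite square root $\boldsymbol{K}^{1/2}$, observe that the symmetrizer identity makes $\boldsymbol{K}^{1/2}\boldsymbol{J}\boldsymbol{K}^{-1/2}$ symmetric, apply the spectral theorem, and take $\boldsymbol{P}$ as the orthogonal diagonalizer composed with $\boldsymbol{K}^{1/2}$. The only difference is cosmetic (you write $\boldsymbol{P}=\boldsymbol{Q}^T\boldsymbol{L}$ where the paper writes $\boldsymbol{P}=\boldsymbol{U}\boldsymbol{K}^{1/2}$), and you actually spell out the equivalence $\boldsymbol{M}^T=\boldsymbol{M}\Leftrightarrow\boldsymbol{K}\boldsymbol{J}=\boldsymbol{J}^T\boldsymbol{K}$ more explicitly than the paper does.
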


\begin{proof}
    Note that the square root of a symmetric positive definite matrix is symmetric and invertible. Therefore, by the assumptions $\boldsymbol{K}^{-1/2}\boldsymbol{K}\boldsymbol{J}\boldsymbol{K}^{-1/2}$ is symmetric, and therefore admits a spectral decomposition with a real-valued diagonal matrix $\boldsymbol{\Lambda}$ and an orthogonal matrix $\boldsymbol{U}$:
    \begin{equation}
    \label{eq:proof_lemma1}
        \boldsymbol{K}^{-1/2}\boldsymbol{K}\boldsymbol{J}\boldsymbol{K}^{-1/2} = \boldsymbol{U}^T\boldsymbol{\Lambda}\boldsymbol{U} \qquad \Leftrightarrow \qquad \boldsymbol{K}\boldsymbol{J} = \boldsymbol{K}^{1/2}\boldsymbol{U}^T\boldsymbol{\Lambda}\boldsymbol{U}\boldsymbol{K}^{1/2}.
    \end{equation}
    If we set $\boldsymbol{P} = \boldsymbol{U}\boldsymbol{K}^{1/2}$ we satisfy (ii) as follows: $\boldsymbol{P}^T\boldsymbol{P} = \boldsymbol{K}^{1/2}\boldsymbol{U}^T\boldsymbol{U}\boldsymbol{K}^{1/2} = \boldsymbol{K}$, where $\boldsymbol{K}$ consists of the symmetric positive definite block matrices as required. Further note that $\boldsymbol{P} = \boldsymbol{U}\boldsymbol{K}^{1/2}$ is by construction invertible. For (i), we start with the second expression in Eq. \eqref{eq:proof_lemma1}, introduce the definition of $\boldsymbol{P}$ and use the previously established relation (ii):
    \begin{equation}
        \boldsymbol{K}\boldsymbol{J} = \boldsymbol{K}^{1/2}\boldsymbol{U}^T\boldsymbol{\Lambda}\boldsymbol{U}\boldsymbol{K}^{1/2} \quad \Leftrightarrow \quad \boldsymbol{P}^T\boldsymbol{P}\boldsymbol{J}=\boldsymbol{P}^T\boldsymbol{\Lambda}\boldsymbol{P} \quad \Leftrightarrow \quad \boldsymbol{P}\boldsymbol{J}=\boldsymbol{\Lambda}\boldsymbol{P}.
    \end{equation}
    Finally, we can identify the real-valued diagonal matrix as follows: $\boldsymbol{\Lambda} = -\diag{(\lambda_1,\dots,\lambda_{qn})}$.
\end{proof}

With all definitions in place, we establish the following Proposition \ref{prop:stab_periodic} on periodic domains without body force $\Tilde{\boldsymbol{B}}$.

\begin{prop}[Stability with periodic boundary conditions] \label{prop:stab_periodic}
    In the absence of body forces, the lattice Boltzmann scheme on a periodic domain, which further satisfies all conditions of Proposition \ref{prop:consistency_interior}, is stable in the weighted L2 grid norm of Definition \ref{def:grid_norm}, i.\,e.
    \begin{equation}
        ||\boldsymbol{f}||_{\boldsymbol{P}}(t) \leq ||\boldsymbol{f}||_{\boldsymbol{P}}(0) \qquad \forall t\in\mathbb{T}.
    \end{equation}
\end{prop}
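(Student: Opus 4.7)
The plan is to reduce stability to separate norm estimates for the collision and streaming stages and then iterate over time steps. On a periodic lattice, streaming simply permutes the grid sum $\sum_{\boldsymbol{x}\in\mathbb{G}}|\boldsymbol{f}_{ij}(\boldsymbol{x},\cdot)|^2_{\boldsymbol{k}_{ij}}$ for each $(i,j)$ independently, so any norm built from a velocity-block-diagonal weight $\boldsymbol{K}$ is automatically preserved by streaming; the crux is therefore to construct such a $\boldsymbol{K}$ that yields a pre-stability structure for the collision matrix $\boldsymbol{J}=2(\boldsymbol{G}-\boldsymbol{I}_{qn})$ (recall $\omega=2$), via Lemma \ref{lem:1}.

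To build $\boldsymbol{K}=\diag(\boldsymbol{k}_{10},\boldsymbol{k}_{01},\boldsymbol{k}_{\bar{1}0},\boldsymbol{k}_{0\bar{1}})$, I would first observe by a direct block calculation that the $(i,j),(k,l)$-block of $\boldsymbol{K}\boldsymbol{G}$ equals $\boldsymbol{k}_{ij}\boldsymbol{g}_{ij}$ (independent of $(k,l)$), so symmetry of $\boldsymbol{K}\boldsymbol{G}$ forces $\boldsymbol{k}_{ij}\boldsymbol{g}_{ij}$ to coincide with the same symmetric matrix for every $(i,j)$. The natural choice is $\boldsymbol{k}_{ij}:=\boldsymbol{g}_{ij}^{-1}$, which collapses every block of $\boldsymbol{K}\boldsymbol{G}$ to $\boldsymbol{I}_n$. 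Symmetry of $\boldsymbol{g}_{ij}$ follows from the symmetry of $\boldsymbol{A}_x,\boldsymbol{A}_y$ built into Eq.~\eqref{eq:system}, and for positive definiteness I would compute the spectra of $\boldsymbol{A}_x$ and $\boldsymbol{A}_y$ (each block anti-diagonal with nonzero eigenvalues $\pm\sqrt{\Tilde{c}_K^2+\Tilde{c}_\mu^2}$ and $\pm\Tilde{c}_\mu$, plus one zero eigenvalue), leading to the CFL-like condition $c>2\sqrt{\Tilde{c}_K^2+\Tilde{c}_\mu^2}$ anticipated by the abstract. Under this condition every $\boldsymbol{k}_{ij}$ is symmetric positive definite and $\boldsymbol{K}\boldsymbol{J}=2\boldsymbol{K}\boldsymbol{G}-2\boldsymbol{K}$ is symmetric, so Lemma \ref{lem:1} delivers the pre-stability structure $\boldsymbol{P}\boldsymbol{J}=-\diag(\lambda_1,\dots,\lambda_{qn})\boldsymbol{P}$.

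The decisive algebraic fact I would then exploit is that $\sum_{(k,l)\in\mathcal{V}}\boldsymbol{g}_{kl}=\boldsymbol{I}_n$ (the odd lattice moments of $\boldsymbol{A}_x,\boldsymbol{A}_y$ vanish on D2Q4), which immediately yields $\boldsymbol{G}^2=\boldsymbol{G}$; hence $\boldsymbol{G}$ is a projector, the spectrum of $\boldsymbol{J}$ lies in $\{-2,0\}$, and $\lambda_i\in\{0,2\}$ so that $(1-\lambda_i)^2=1$ for every $i$. Combined with $\boldsymbol{P}(\boldsymbol{I}_{qn}+\boldsymbol{J})=(\boldsymbol{I}_{qn}-\diag(\lambda_i))\boldsymbol{P}$, this makes the collision step exactly norm-preserving, $||\boldsymbol{f}^*||_{\boldsymbol{P}}=||\boldsymbol{f}||_{\boldsymbol{P}}$. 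The streaming identity $||\boldsymbol{f}(\cdot,t+\Delta t)||_{\boldsymbol{P}}=||\boldsymbol{f}^*(\cdot,t)||_{\boldsymbol{P}}$ follows from the change of variables $\boldsymbol{x}\mapsto\boldsymbol{x}-\boldsymbol{c}_{ij}\Delta t$ being a bijection of $\mathbb{G}$ on the periodic lattice; iterating over $m\in\mathbb{N}$ then yields the stated estimate (in fact as equality). The main obstacle will be the construction and verification of $\boldsymbol{K}$: establishing positive definiteness of every $\boldsymbol{g}_{ij}$ requires the explicit spectral analysis of $\boldsymbol{A}_x,\boldsymbol{A}_y$ and is where the CFL threshold enters; once the symmetrizer is in place, the projector identity $\boldsymbol{G}^2=\boldsymbol{G}$ essentially closes the argument.
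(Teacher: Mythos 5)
Your proposal is correct and follows essentially the same route as the paper's proof: the symmetrizer $\boldsymbol{K}=\diag(\boldsymbol{g}_{ij}^{-1})$ with positive definiteness under $2\sqrt{\Tilde{c}_K^2+\Tilde{c}_\mu^2}<c$, Lemma \ref{lem:1} giving the pre-stability structure, the projector property $\boldsymbol{G}^2=\boldsymbol{G}$ forcing $\lambda_k\in\{0,2\}$ so that collision at $\omega=2$ is isometric, and streaming as a bijective relabeling of the periodic lattice, iterated over time steps. You in fact supply two details the paper leaves implicit (the identity $\sum_{(k,l)\in\mathcal{V}}\boldsymbol{g}_{kl}=\boldsymbol{I}_n$ behind $\boldsymbol{G}^2=\boldsymbol{G}$, and the explicit spectra of $\boldsymbol{A}_x,\boldsymbol{A}_y$ yielding the CFL threshold), which is consistent with and strengthens the argument.
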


The proof is closely related to the works of \cite{Zhao2020s,Zhao2024}, but now applied to the case of linear elastodynamics. Note that our target problem is linear and therefore the proof does not involve any linearization and is therefore free of approximations or further assumptions.

\begin{proof}
    The evolution of the population vector $\boldsymbol{f}$ up to some arbitrary time $t\in\mathbb{T}$ corresponds to the repeated application of the collision and streaming stages defined in Eqs. \eqref{eq:collision} and \eqref{eq:streaming}, respectively. Therefore, it is sufficient to show that $||\boldsymbol{f}||_{\boldsymbol{P}}(t+\Delta t) \leq ||\boldsymbol{f}||_{\boldsymbol{P}}(t)$ for each arbitrary $t\in\mathbb{T}$. We shall demonstrate this in two stages: first the collision and then the streaming.

\emph{Collision. } The symmetric nature of the $\boldsymbol{g}_{ij}$ introduced in Section \ref{prelim} allows us to construct a symmetrizer $\boldsymbol{K}\in\mathbb{R}^{qn \times qn}$ as follows:
    \begin{equation}
    \label{eq:def_symmetrizer}
        \boldsymbol{K} := \diag{(\boldsymbol{k}_{10}, \boldsymbol{k}_{01}, \boldsymbol{k}_{\Bar{1}0}, \boldsymbol{k}_{0\Bar{1}})} = \diag{(\boldsymbol{g}^{-1}_{10}, \boldsymbol{g}^{-1}_{01}, \boldsymbol{g}^{-1}_{\Bar{1}0}, \boldsymbol{g}^{-1}_{0\Bar{1}})}.
    \end{equation}
    Note that in order to satisfy the conditions of Definition \ref{def:prestability} and Lemma \ref{lem:1}, i.\,e. symmetric positive definite $\boldsymbol{k}_{ij}$ for all $(i,j)\in\mathcal{V}$, we need
    \begin{equation}
    \label{eq:stab_cond}
        2\sqrt{\Tilde{c}_K^2+\Tilde{c}_{\mu}^2} < c,
    \end{equation}
    assuming positive speeds $\Tilde{c}_K$ and $\Tilde{c}_{\mu}$. Note that this corresponds exactly to the sub-characteristic condition or CFL condition that the fastest wave speed times the number of dimensions is slower than the numerical propagation speed $c=\Delta x/\Delta t$ \cite{Courant1928,Dubois2013,Graille2014}.

    It is straightforward to see that by construction $\boldsymbol{K}\boldsymbol{G}$ is symmetric and therefore also $\boldsymbol{K}\boldsymbol{J} = \omega(\boldsymbol{K}\boldsymbol{G}-\boldsymbol{K})$ (see Eq. \eqref{eq:collision_vector}). As a consequence of Lemma \ref{lem:1}, $\boldsymbol{J}$ admits a pre-stability structure according to Definition \ref{def:prestability}, which in a next step enables us to establish the contraction property of the collision stage.

    First, note that $\boldsymbol{G}$ is an orthogonal projector, i.\,e. $\boldsymbol{G}^2=\boldsymbol{G}$, with spectrum $\{0,1\}$, i.\,e. $\spec{(\boldsymbol{G})}=\{0,1\}$. To show this we consider the eigenvalue relation $\boldsymbol{G}\boldsymbol{w}_k=\phi_k \boldsymbol{w}_k$ involving an eigenvector $\boldsymbol{w}_k$ and corresponding eigenvalue $\phi_k$ for each $k=1,\dots,qn$. Therefore, we obtain
    \begin{equation}
        \phi_k \boldsymbol{w}_k = \boldsymbol{G}\boldsymbol{w}_k = \boldsymbol{G}^2\boldsymbol{w}_k = \boldsymbol{G}(\phi_k \boldsymbol{w}_k) = \phi_k^2 \boldsymbol{w}_k \qquad \forall k=1,\dots,qn.
    \end{equation}
    Comparing the leftmost and rightmost expressions, we conclude that $\phi_k=0$ or $\phi_k=1$ for all $k=1,\dots,qn$.
    
    Using the definition in Eq. \eqref{eq:collision_vector}, it follows that $\spec{(\boldsymbol{J})} = \spec{(\omega(\boldsymbol{G}-\boldsymbol{I}_{qn}))} = \{-\omega,0\}$. Because $\boldsymbol{J}$ admits a pre-stability structure according to Definition \ref{def:prestability}, we can use property (i) to conclude that there exists a similarity transformation between $\boldsymbol{J}$ and $-\diag{(\lambda_1,\dots,\lambda_{qn})}$ and therefore $\lambda_k \in\{0,\omega\}$ for all $k=1,\dots,qn$. Combining all the properties established so far, we can now show the contraction property of the collision stage. To this end, we take the weighted L2 grid norm (see Definition \ref{def:grid_norm}) of Eq. \eqref{eq:collision_vector} at some arbitrary time $t\in\mathbb{T}$:
    \begin{equation}
    \label{eq:bound_collision}
    \begin{split}
        ||\boldsymbol{f}^*||^2_{\boldsymbol{P}}(t) &= ||(\boldsymbol{I}_{qn}+\boldsymbol{J})\boldsymbol{f}||^2_{\boldsymbol{P}}(t) = \sum_{\boldsymbol{x}\in\mathbb{G}} |\boldsymbol{P}(\boldsymbol{I}_{qn}+\boldsymbol{J})\boldsymbol{f}|^2(t) \\
        &= \sum_{\boldsymbol{x}\in\mathbb{G}} |(\boldsymbol{I}_{qn}-\diag{(\lambda_1,\dots,\lambda_{qn})})\boldsymbol{P}\boldsymbol{f}|^2(t) \\
        &\leq \sum_{\boldsymbol{x}\in\mathbb{G}} |(\boldsymbol{I}_{qn}-\diag{(\lambda_1,\dots,\lambda_{qn})})|^2|\boldsymbol{P}\boldsymbol{f}|^2(t) \\
        &\leq \sum_{\boldsymbol{x}\in\mathbb{G}} \max_{k=1,\dots,qn}{|1-\lambda_k|^2|}\boldsymbol{P}\boldsymbol{f}|^2(t) \leq \sum_{\boldsymbol{x}\in\mathbb{G}} |\boldsymbol{P}\boldsymbol{f}|^2(t) = ||\boldsymbol{f}||^2_{\boldsymbol{P}}
    \end{split}
    \end{equation}
    Note that the existence of an invertible matrix $\boldsymbol{P}$ is guaranteed by Lemma \ref{lem:1} under condition Eq. \eqref{eq:stab_cond}. The last inequality requires $\lambda_k\in[0,2]$ for all $k=1,\dots,qn$. With the choice $\omega=2$, the collision step turns into an isometric operation under the weighted L2 grid norm.

    \emph{Streaming. } Turning to the streaming stage, we first observe that the streaming operation in Eq. \eqref{eq:streaming} including periodic boundary conditions (see Section \ref{sec:lbm}) is a bijection  from the lattice onto itself, i.\,e. $\chi: \mathbb{G}\times\mathcal{V}\rightarrow\mathbb{G}\times\mathcal{V}$. In other words, this means that for each population after streaming with lattice node position $\boldsymbol{y}\in\mathbb{G}$ and indices $(k,l)\in\mathcal{V}$, we can identify a unique position $\boldsymbol{x}\in\mathbb{G}$ and index $(i,j)\in\mathcal{V}$ where it came from. In the case of "normal" streaming in the domain interior, this position is one of the neighbor nodes, whereas at the periodic boundaries the incoming population on one side is the outgoing population at the opposing side. In both cases the velocity index remains unchanged. As a result, we obtain
    \begin{equation}
    \label{eq:bound_streaming}
        ||\boldsymbol{f}||^2_{\boldsymbol{P}}(t+\Delta t) = \sum_{\boldsymbol{y}\in\mathbb{G}} |\boldsymbol{P}\boldsymbol{f}(\boldsymbol{y},t+\Delta t)|^2 = \sum_{\boldsymbol{y}\in\mathbb{G}} \sum_{(i,j)\in\mathcal{V}} |\boldsymbol{f}_{ij}(\boldsymbol{y},t+\Delta t)|^2_{\boldsymbol{k}_{ij}} = \sum_{\boldsymbol{x}\in\mathbb{G}} \sum_{(i,j)\in\mathcal{V}} |\boldsymbol{f}^*_{ij}(\boldsymbol{x},t)|^2_{\boldsymbol{k}_{ij}} = ||\boldsymbol{f}^*||^2_{\boldsymbol{P}}(t),
    \end{equation}
    which implies isometry of the streaming stage under the weighted L2 grid norm. Note how the property (ii) of Definition \ref{def:prestability} is applied, which relies on the positive definite block-diagonal structure of $\boldsymbol{K}$. Furthermore, the existence of the bijection $\chi$ allows us to replace the finite sum over the lattice $\mathbb{G}$ of the post-streaming state with an equivalent sum at the pre-streaming state.

    Combining the results of Eqs. \eqref{eq:bound_collision} and \eqref{eq:bound_streaming} for a single time step yields
    \begin{equation}
        ||\boldsymbol{f}||_{\boldsymbol{P}}(t+\Delta t) = ||\boldsymbol{f}^*||_{\boldsymbol{P}}(t) = ||\boldsymbol{f}||_{\boldsymbol{P}}(t)
    \end{equation}
    for arbitrary $t\in\mathbb{T}$, with $\omega=2$ and the condition in Eq. \eqref{eq:stab_cond}.
\end{proof}

Note that a similar proof of Proposition \ref{prop:stab_periodic} is possible with nonzero forcing $\Tilde{\boldsymbol{B}}$ if $\Tilde{\boldsymbol{B}}$ is bounded for all $(\boldsymbol{x},t)\in\Tilde{\Omega}\times(0,\Tilde{t}_f)$. The statement then involves a constant, which depends on this bound:
\begin{equation}
\label{eq:stab_forcing}
        ||\boldsymbol{f}||_{\boldsymbol{P}}(t) \leq C||\boldsymbol{f}||_{\boldsymbol{P}}(0) \qquad \forall t\in\mathbb{T} \qquad \text{for some constant }C\in\mathbb{R} \text{ independent of }\mathbb{G}\times\mathbb{T}.
\end{equation}

\subsection{Stability of vectorial LBM for elastodynamics with Dirichlet boundary conditions} \label{sec:bcs}
Lastly, we investigate the stability of the LBM formulation in the presence of Dirichlet boundary conditions. For simplicity, we assume homogeneous boundary conditions. However, a similar statement is possible with nonzero Dirichlet boundary conditions $\Tilde{\boldsymbol{u}}_D$ (and nonzero forcing $\Tilde{\boldsymbol{B}}$) if we introduce some constant $C\in\mathbb{R}$ that depends on upper bounds for both sources of data (see Eq. \eqref{eq:stab_forcing}).

\begin{prop}[Stability with Dirichlet boundary conditions] \label{prop:stability_dbc}
    In the absence of body forces and for homogeneous Dirichlet boundary conditions $\Tilde{\boldsymbol{u}}_D$ , the lattice Boltzmann scheme, which satisfies all conditions of Propositions \ref{prop:consistency_interior} and \ref{prop:consistency_dbc}, is stable in the weighted L2 grid norm of Definition \ref{def:grid_norm}, i.\,e.
    \begin{equation}
        ||\boldsymbol{f}||_{\boldsymbol{P}}(t) \leq ||\boldsymbol{f}||_{\boldsymbol{P}}(0) \qquad \forall t\in\mathbb{T}.
    \end{equation}
\end{prop}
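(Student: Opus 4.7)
The plan is to closely mirror the proof of Proposition \ref{prop:stab_periodic}, keeping the collision step essentially unchanged and focusing all the work on the streaming step, which is the only stage affected by the boundary formulation.

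\emph{Collision.} I would first observe that the collision operator acts locally on each lattice node and therefore does not distinguish between interior and boundary nodes. Consequently, the contraction (in fact isometric, since $\omega=2$) bound $||\boldsymbol{f}^*||_{\boldsymbol{P}}(t)=||\boldsymbol{f}||_{\boldsymbol{P}}(t)$ obtained in Eq. \eqref{eq:bound_collision} carries over verbatim, provided the CFL-like condition Eq. \eqref{eq:stab_cond} holds, which is already imposed by Proposition \ref{prop:consistency_interior}. The symmetrizer $\boldsymbol{K}=\diag(\boldsymbol{g}_{10}^{-1},\boldsymbol{g}_{01}^{-1},\boldsymbol{g}_{\Bar{1}0}^{-1},\boldsymbol{g}_{0\Bar{1}}^{-1})$ and the induced blocks $\boldsymbol{k}_{ij}=\boldsymbol{g}_{ij}^{-1}$ remain unchanged.

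\emph{Streaming.} I would split the sum defining $||\boldsymbol{f}||^2_{\boldsymbol{P}}(t+\Delta t)$ into two groups of pairs $(\boldsymbol{x},(i,j))$: those for which $(i,j)\notin\mathcal{D}_{\boldsymbol{x}}$, reached by ordinary streaming from an interior neighbour, and the boundary pairs $(\boldsymbol{x},(i,j))$ with $\boldsymbol{x}\in\mathbb{B}$ and $(i,j)\in\mathcal{D}_{\boldsymbol{x}}$, reconstructed via Eq. \eqref{eq:bc_ansatz}. For the first group, the argument used in Eq. \eqref{eq:bound_streaming} applies unchanged: each such incoming population equals $\boldsymbol{f}^*_{ij}$ of a (unique) neighbour node with the same index, and these pairs biject onto the interior post-collision pairs $(\boldsymbol{x},(i,j))$ with $(i,j)\notin -\mathcal{D}_{\boldsymbol{x}}$. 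For the second group, the homogeneous Dirichlet condition gives $\boldsymbol{S}_{ij}=\boldsymbol{0}$, so $\boldsymbol{f}_{ij}(\boldsymbol{x},t+\Delta t)=\boldsymbol{D}\boldsymbol{f}^*_{\Bar{\imath}\Bar{\jmath}}(\boldsymbol{x},t)$, which defines a bijection onto the post-collision pairs $(\boldsymbol{x},(\Bar{\imath},\Bar{\jmath}))$ with $(\Bar{\imath},\Bar{\jmath})\in -\mathcal{D}_{\boldsymbol{x}}$ (i.e. exactly those populations that would stream out of the domain). Together the two bijections exhaust the lattice.

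\emph{Key algebraic identity.} To conclude that the boundary bijection preserves the norm contribution I would verify
\begin{equation}
    \boldsymbol{D}^T\boldsymbol{k}_{ij}\boldsymbol{D}=\boldsymbol{k}_{\Bar{\imath}\Bar{\jmath}},\qquad\text{equivalently}\qquad \boldsymbol{D}\,\boldsymbol{g}_{ij}\,\boldsymbol{D}=\boldsymbol{g}_{\Bar{\imath}\Bar{\jmath}},
\end{equation}
which follows at once (since $\boldsymbol{D}^2=\boldsymbol{I}_n$ and $\boldsymbol{g}_{ij}=\tfrac{1}{4}\boldsymbol{I}_n+\tfrac{1}{2c}(i\boldsymbol{A}_x+j\boldsymbol{A}_y)$) from the two elementary conjugation relations
\begin{equation}
    \boldsymbol{D}\boldsymbol{A}_x\boldsymbol{D}=-\boldsymbol{A}_x,\qquad \boldsymbol{D}\boldsymbol{A}_y\boldsymbol{D}=-\boldsymbol{A}_y.
\end{equation}
These identities can be checked by direct inspection of the explicit expressions for $\boldsymbol{A}_x,\boldsymbol{A}_y$ in Eq. \eqref{eq:stability_linear}: every nonzero entry couples the velocity block (components 1,2) to the stress block (components 3,4,5), so the diagonal sign pattern $\boldsymbol{D}=\diag(-1,-1,1,1,1)$ flips exactly one factor in every matrix element. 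With this identity, $|\boldsymbol{D}\boldsymbol{f}^*_{\Bar{\imath}\Bar{\jmath}}|^2_{\boldsymbol{k}_{ij}}=|\boldsymbol{f}^*_{\Bar{\imath}\Bar{\jmath}}|^2_{\boldsymbol{k}_{\Bar{\imath}\Bar{\jmath}}}$, so the boundary bijection is isometric in the same weighted norm.

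\emph{Combination and main obstacle.} Adding the contributions of the two bijections recovers the full pre-streaming sum $||\boldsymbol{f}^*||^2_{\boldsymbol{P}}(t)$, giving $||\boldsymbol{f}||_{\boldsymbol{P}}(t+\Delta t)=||\boldsymbol{f}^*||_{\boldsymbol{P}}(t)=||\boldsymbol{f}||_{\boldsymbol{P}}(t)$, which iterates to the claim. The only real obstacle beyond bookkeeping is the verification of the conjugation identity $\boldsymbol{D}\boldsymbol{A}_\alpha\boldsymbol{D}=-\boldsymbol{A}_\alpha$; this is precisely the algebraic reason why the mixed bounce-back / anti bounce-back choice encoded in $\boldsymbol{D}$ (together with the specific ordering of the components of $\boldsymbol{U}$ chosen in Section \ref{sec:equation}) is compatible with the symmetrizer selected in the periodic stability proof, and it is what makes the Dirichlet boundary handling a strict generalization of the periodic one as far as the energy estimate is concerned.
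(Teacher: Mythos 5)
Your proposal is correct and follows essentially the same route as the paper's proof: collision carries over unchanged, streaming is split into the interior bijection and the boundary bijection, and the estimate hinges on the identity $\boldsymbol{k}_{\Bar{\imath}\Bar{\jmath}}=\boldsymbol{D}^T\boldsymbol{k}_{ij}\boldsymbol{D}$. Your explicit verification via $\boldsymbol{D}\boldsymbol{A}_x\boldsymbol{D}=-\boldsymbol{A}_x$, $\boldsymbol{D}\boldsymbol{A}_y\boldsymbol{D}=-\boldsymbol{A}_y$ and $\boldsymbol{D}\boldsymbol{g}_{ij}\boldsymbol{D}=\boldsymbol{g}_{\Bar{\imath}\Bar{\jmath}}$ simply spells out what the paper asserts follows from its definitions, so the two arguments coincide.
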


\begin{proof}
    The proof is the same as for Proposition \ref{prop:stab_periodic} except for the streaming portion of the algorithm. With the presence of the boundary formulation, we separate the handling of normal streaming (Eq. \eqref{eq:streaming}) and the boundary formulation (Eq. \eqref{eq:bc_ansatz}). Again, for all populations undergoing normal streaming this operation is described by a bijection $\chi: \mathbb{G}\times(\mathcal{V}\setminus(-\mathcal{D}_{\boldsymbol{x}})) \rightarrow\mathbb{G}\times(\mathcal{V}\setminus\mathcal{D}_{\boldsymbol{y}})$ for each lattice node $\boldsymbol{x},\boldsymbol{y}\in\mathbb{G}$. The domain of $\chi$ contains all lattice nodes and velocity indices apart from the ones that would be streamed outside the domain, whereas the range encompasses all nodes and all indices except the ones that would require streaming from outside the domain. Note that $\mathcal{D}_{\boldsymbol{x}}=-\mathcal{D}_{\boldsymbol{x}}=\emptyset$ for each $\boldsymbol{x}\in\mathbb{G}\setminus\mathbb{B}$, which allows us to conclude the following:
    \begin{equation}
    \label{eq:stab_bc_streaming}
        \sum_{\boldsymbol{y}\in\mathbb{G}} \sum_{(i,j)\in\mathcal{V}\setminus\mathcal{D}_{\boldsymbol{y}}} |\boldsymbol{f}_{ij}(\boldsymbol{y},t+\Delta t)|^2_{\boldsymbol{k}_{ij}} = \sum_{\boldsymbol{x}\in\mathbb{G}} \sum_{(i,j)\in\mathcal{V}\setminus(-\mathcal{D}_{\boldsymbol{x}})} |\boldsymbol{f}^*_{ij}(\boldsymbol{x},t)|^2_{\boldsymbol{k}_{ij}} = \sum_{\boldsymbol{x}\in\mathbb{G}\setminus\mathbb{B}} \sum_{(i,j)\in\mathcal{V}} |\boldsymbol{f}^*_{ij}(\boldsymbol{x},t)|^2_{\boldsymbol{k}_{ij}} + \sum_{\boldsymbol{x}\in\mathbb{B}} \sum_{(i,j)\in\mathcal{V}\setminus(-\mathcal{D}_{\boldsymbol{x}})} |\boldsymbol{f}^*_{ij}(\boldsymbol{x},t)|^2_{\boldsymbol{k}_{ij}}.
    \end{equation}

    For the populations $(i,j)\in\mathcal{D}_{\boldsymbol{x}}$ at all boundary nodes $\boldsymbol{x}\in\mathbb{B}$ we adopt the boundary formulation from Eq. \eqref{eq:bc_ansatz} with $\boldsymbol{D} := \diag(-1,-1,1,1,1)$ and $\boldsymbol{S}_{ij}=\boldsymbol{0}$ for the homogeneous boundary condition. Note that the boundary formulation is a bijection of the form $\psi: \mathbb{B}\times(-\mathcal{D}_{\boldsymbol{x}})\rightarrow\mathbb{B}\times\mathcal{D}_{\boldsymbol{x}}$ for each $\boldsymbol{x}\in\mathbb{B}$. Together with this observation, the crucial property that allows us to conclude the stability of the boundary formulation is
    \begin{equation}
        \boldsymbol{k}_{\Bar{\imath}\Bar{\jmath}} = \boldsymbol{D}^T\boldsymbol{k}_{ij}\boldsymbol{D} \qquad \forall (i,j)\in\mathcal{V},
    \end{equation}
    which follows from the definitions in Eqs. \eqref{eq:stability_linear} and \eqref{eq:def_symmetrizer}. Therefore, we obtain for all missing incoming populations
    \begin{equation}
    \label{eq:stab_bc_bb}
    \begin{split}
        \sum_{\boldsymbol{x}\in\mathbb{B}} \sum_{(i,j)\in\mathcal{D}_{\boldsymbol{x}}} |\boldsymbol{f}_{ij}(\boldsymbol{x},t+\Delta t)|^2_{\boldsymbol{k}_{ij}} &= \sum_{\boldsymbol{x}\in\mathbb{B}} \sum_{(\Bar{\imath},\Bar{\jmath})\in(-\mathcal{D}_{\boldsymbol{x}})} |\boldsymbol{D}\boldsymbol{f}^*_{\Bar{\imath}\Bar{\jmath}}(\boldsymbol{x},t)|^2_{\boldsymbol{k}_{ij}} = \sum_{\boldsymbol{x}\in\mathbb{B}} \sum_{(\Bar{\imath},\Bar{\jmath})\in(-\mathcal{D}_{\boldsymbol{x}})} \boldsymbol{f}^{*T}_{\Bar{\imath}\Bar{\jmath}}(\boldsymbol{x},t)\boldsymbol{D}^T\boldsymbol{k}_{ij}\boldsymbol{D}\boldsymbol{f}^*_{\Bar{\imath}\Bar{\jmath}}(\boldsymbol{x},t) \\
        &=\sum_{\boldsymbol{x}\in\mathbb{B}} \sum_{(\Bar{\imath},\Bar{\jmath})\in(-\mathcal{D}_{\boldsymbol{x}})} \boldsymbol{f}^{*T}_{\Bar{\imath}\Bar{\jmath}}(\boldsymbol{x},t)\boldsymbol{k}_{\Bar{\imath}\Bar{\jmath}}\boldsymbol{f}^*_{\Bar{\imath}\Bar{\jmath}}(\boldsymbol{x},t) = \sum_{\boldsymbol{x}\in\mathbb{B}} \sum_{(\Bar{\imath},\Bar{\jmath})\in(-\mathcal{D}_{\boldsymbol{x}})} |\boldsymbol{f}^*_{\Bar{\imath}\Bar{\jmath}}(\boldsymbol{x},t)|^2_{\boldsymbol{k}_{\Bar{\imath}\Bar{\jmath}}}.
    \end{split}
    \end{equation}
    Combining the results of Eqs. \eqref{eq:stab_bc_streaming} and \eqref{eq:stab_bc_bb} yields 
    \begin{equation}
        \begin{split}
            ||\boldsymbol{f}||^2_{\boldsymbol{P}}(t+\Delta t) &= \sum_{\boldsymbol{y}\in\mathbb{G}} \sum_{(i,j)\in\mathcal{V}\setminus\mathcal{D}_{\boldsymbol{y}}} |\boldsymbol{f}_{ij}(\boldsymbol{y},t+\Delta t)|^2_{\boldsymbol{k}_{ij}} + \sum_{\boldsymbol{x}\in\mathbb{B}} \sum_{(i,j)\in\mathcal{D}_{\boldsymbol{x}}} |\boldsymbol{f}_{ij}(\boldsymbol{x},t+\Delta t)|^2_{\boldsymbol{k}_{ij}} \\
            &= \sum_{\boldsymbol{x}\in\mathbb{G}\setminus\mathbb{B}} \sum_{(i,j)\in\mathcal{V}} |\boldsymbol{f}^*_{ij}(\boldsymbol{x},t)|^2_{\boldsymbol{k}_{ij}} + \sum_{\boldsymbol{x}\in\mathbb{B}} \sum_{(i,j)\in\mathcal{V}\setminus(-\mathcal{D}_{\boldsymbol{x}})} |\boldsymbol{f}^*_{ij}(\boldsymbol{x},t)|^2_{\boldsymbol{k}_{ij}} + \sum_{\boldsymbol{x}\in\mathbb{B}} \sum_{(\Bar{\imath},\Bar{\jmath})\in(-\mathcal{D}_{\boldsymbol{x}})} |\boldsymbol{f}^*_{\Bar{\imath}\Bar{\jmath}}(\boldsymbol{x},t)|^2_{\boldsymbol{k}_{\Bar{\imath}\Bar{\jmath}}} = ||\boldsymbol{f}^*||^2_{\boldsymbol{P}}(t),
        \end{split}
    \end{equation}
    which shows that streaming together with the Dirichlet boundary formulation (with a homogeneous boundary condition) is an isometric operation in the weighted L2 grid norm. Together with the already established result for the collision stage (see proof of Proposition \ref{prop:stab_periodic}), we achieve the proposed stability property.
\end{proof}

\section{Numerical verification} \label{sec:verification}

For the numerical verification of the theoretical derivations, in this section we carry out a series of convergence studies using the method of manufactured solutions. We also perform long-term simulations in order to numerically verify the predicted stability properties.

\subsection{Convergence studies} \label{sec:num_convergence}

We employ the method of manufactured solutions to investigate the convergence order of the novel lattice Boltzmann formulation for linear elastodynamics on periodic domains as well as for Dirichlet problems. To this end, we assume a certain ansatz for the displacement solution $\hat{\boldsymbol{u}}: \Bar{\Omega}\times[0,\Tilde{t}_f)\rightarrow\mathbb{R}^2$, which we define on the interior and at the boundary of the domain, i.\,e. on $\Bar{\Omega}:=\Tilde{\Omega}\cup\partial\Tilde{\Omega}$, as well as for $t=0$. Next, we introduce $\hat{\boldsymbol{u}}$ into the governing equation which yields the corresponding body load and initial conditions, as well as boundary conditions (in the case of the Dirichlet problem) for a given set of material constants $K$ and $\mu$ and therefore speeds $c_K$ and $c_{\mu}$:
\begin{align}
    \boldsymbol{b} &= \left.\left(\partial_t^2\hat{\boldsymbol{u}} - \nabla \cdot \boldsymbol{\sigma}\left(\hat{\boldsymbol{u}}\right)\right)\right|_{\Omega\times(0,t_f)} \\
    \boldsymbol{u}_0 &= \left.\hat{\boldsymbol{u}}\right|_{\Omega\times\{0\}} \\
    \boldsymbol{v}_0 &= \left.\partial_t\hat{\boldsymbol{u}}\right|_{\Omega\times\{0\}} \\
    \boldsymbol{u}_D &= \left.\hat{\boldsymbol{u}}\right|_{\partial\Omega\times(0,t_f)}.
\end{align}
We denote the restriction of $\hat{\boldsymbol{u}}$ to a certain domain  by $\hat{\boldsymbol{u}}|_{(\bullet)}$.
Using the data, we carry out a series of simulations on consecutively finer discretizations according to acoustic scaling, i.\,e. $\Delta t \sim \Delta x$, and evaluate the numerical approximation error in the L2 and Linf norms defined below:
\begin{alignat}{3}
    \text{L2}\left(\boldsymbol{e}^{\phi}\right) &:= \left(\Delta x^2\Delta t\sum_{(\boldsymbol{x},t)\in\mathbb{G}\times\mathbb{T}}|\boldsymbol{e}^{\phi}(\boldsymbol{x},t)|^2\right)^{\tfrac{1}{2}} &&\\
    \text{Linf}\left(\boldsymbol{e}^{\phi}\right) &:= \max_{(\boldsymbol{x},t)\in\mathbb{G}\times\mathbb{T}} |\boldsymbol{e}^{\phi}(\boldsymbol{x},t)|_{\infty} &&\\
    \text{with } \boldsymbol{e}^{\phi} &:= \phi^{num} - \hat{\phi} &&\qquad \forall \phi\in\{\boldsymbol{u},{\boldsymbol{\sigma}}\}.
\end{alignat}
In the norm definitions, $|\bullet|$ once again refers to the Euclidean norm, whereas $|\bullet|_{\infty}$ is the maximum norm. As described in Section \ref{sec:lbm_postprocessing}, the displacement approximation $\boldsymbol{u}^{num}$ is obtained through an additional time integration step. For the Cauchy stress, we adopt a vectorized arrangement of the independent components for both the exact solution and the numerical approximation, i.\,e. $\Hat{\boldsymbol{\sigma}} := \left[\begin{matrix}\sigma_{xx}(\hat{\boldsymbol{u}})&\sigma_{yy}(\hat{\boldsymbol{u}})&\sigma_{xy}(\hat{\boldsymbol{u}})\end{matrix}\right]^T$, see also Eq. \eqref{eq:sigma_num}.
For better comparison, we also normalize the error norms by the L2 norm of the exact solution to obtain the relative L2 and Linf norm:
\begin{equation}
    \begin{alignedat}{2}
        \text{L2rel}\left(\boldsymbol{e}^{\phi}\right) &:= \text{L2}\left(\boldsymbol{e}^{\phi}\right)/\text{L2}\left(\Hat{\phi}\right) \\
        \text{Linfrel}\left(\boldsymbol{e}^{\phi}\right) &:= \text{Linf}\left(\boldsymbol{e}^{\phi}\right)/\text{L2}\left(\Hat{\phi}\right)
    \end{alignedat} \qquad \forall\phi\in\{\boldsymbol{u},{\boldsymbol{\sigma}}\}.
\end{equation}
From the second-order consistency and stability demonstrated in Sections \ref{sec:analysis} and \ref{sec:stability}, we expect second-order convergence of the error in both the L2 and Linf norm w.\,r.\,t. the target equation in the limit of vanishing time step size $\Delta t$ and grid spacing $\Delta x$.

\begin{figure}[htb]
    \centering
    \includegraphics{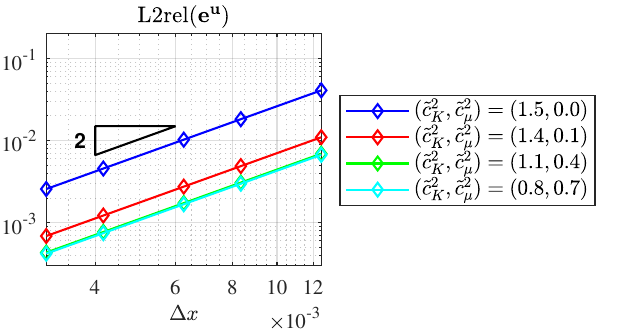} \hspace{-5.02cm}
    \includegraphics{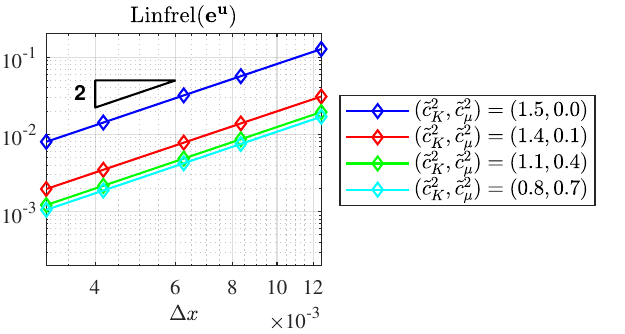}
    \caption{Convergence study with periodic boundary conditions. Comparison of displacement error for multiple material parameter combinations.}
    \label{fig:convergence_disp_per}
\end{figure}

\begin{figure}[htb]
    \centering
    \includegraphics{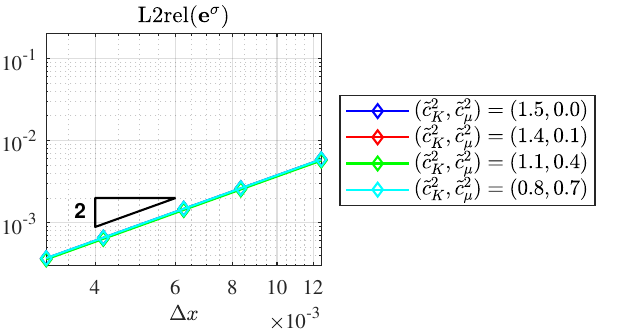} \hspace{-5.02cm}
    \includegraphics{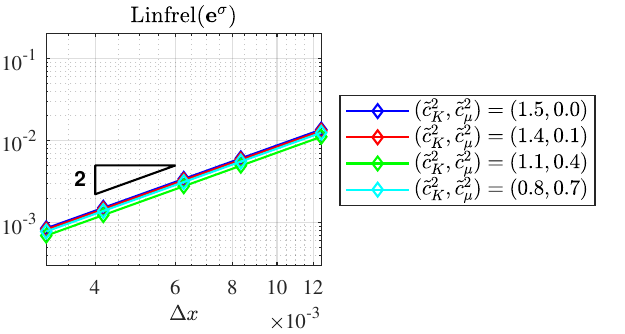}
    \caption{Convergence study with periodic boundary conditions. Comparison of Cauchy stress error for multiple material parameter combinations.}
    \label{fig:convergence_stress_per}
\end{figure}

First, we perform a convergence study on a square periodic domain $\Omega=(0,L)^2$ and $t_f=T$ and evaluate the method for multiple combinations of $\Tilde{c}_K$ and $\Tilde{c}_{\mu}$. Note that all material parameters are chosen in such a way as to maintain the same maximum wave speed according to Eq. \eqref{eq:stab_cond}. The $L$-periodic manufactured solution is given by
\begin{equation}
\label{eq:periodic_moving}
    \hat{\boldsymbol{u}} = \left[\begin{matrix}
        \sin{(4\pi(x-0.3t))}\cos{(2\pi(y-0.8t))}\sin{(4\pi(t-0.1))} \\
        \cos{(4\pi(x-0.7t))}\sin{(2\pi(y-0.1t))}\cos{(4\pi(t+0.4))}
    \end{matrix}\right] \qquad \forall (\boldsymbol{x},t)\in\bar{\Omega}\times[0,\Tilde{t}_f),
\end{equation}
with $\boldsymbol{x}=x\boldsymbol{e}_x+y\boldsymbol{e}_y$. The convergence is performed on lattices with grid spacing $\Delta x$ and time step sizes $\Delta t$ as follows: $(\Delta x,\Delta t)\in\{(1/80,1/200),(1/120,1/300),(1/160,1/400),(1/240,1/600),(1/320,1/800)\}$, thus satisfying the acoustic scaling assumption.

Figures \ref{fig:convergence_disp_per} and \ref{fig:convergence_stress_per} show the error in the displacement and Cauchy stress solution measured in both norms, respectively. As predicted by the analysis in Section \ref{sec:analysis}, we obtain second-order convergence for both solution fields in both norms. However, Figure \ref{fig:convergence_disp_per} shows that the error constant for the displacement field increases with an increase of the ratio of bulk and shear modulus. Conversely, the error in the Cauchy stress solution appears to be largely independent of the chosen material parameters and shows overall slightly lower values than the error in the displacement
solution.

For better comparability, we perform the exact same analysis, but only replace the periodicity by Dirichlet boundary conditions on the whole boundary. The results of the convergence studies are summarized in Figures \ref{fig:convergence_disp_dbc} and \ref{fig:convergence_stress_dbc}. Once again, we obtain second-order convergence of the error in the displacement solution in both norms, which is a strong confirmation of the analysis. Further, we can notice a slightly higher $\text{Linf}\left(\boldsymbol{e}^{\boldsymbol{u}}\right)$ when compared to the periodic case, which can be attributed to additional error contributions due to the boundary formulation. The error of the Cauchy stress solution given in Figure \ref{fig:convergence_stress_dbc} now shows a different behavior. Whereas the L2-norm still shows approximately second-order convergence, we observe a reduced (first-order) convergence in the Linf-norm. This lets us conclude that the Dirichlet boundary formulation introduces a first-order error in the Cauchy stress, which remains localized at the boundary. Note that for the material parameters $(\Tilde{c}_K^2,\Tilde{c}_{\mu}^2)\in\{(1.5,0.0),(1.4,0.1)\}$ the convergence order degrades only  for relatively fine discretizations to first order -- probably because other second-order errors with a higher constant overshadow the first-order error for finite $\Delta x$ (or equivalently $\Delta t$). A similar behavior was already observed with Dirichlet boundary conditions in the linear elastostatics case and was extensively analyzed in \cite{Boolakee2023a}. Overall, also with Dirichlet boundary conditions, we retain the second-order convergence in both solution fields in the L2 norm, i.\,e. in an average sense. Note that, for similar problems, the finite element method with linear ansatz functions  achieves second-order convergence for the displacement solution but only first-order convergence for the Cauchy stress solution \cite{Hughes2000}.

\begin{figure}[htb]
    \centering
    \includegraphics{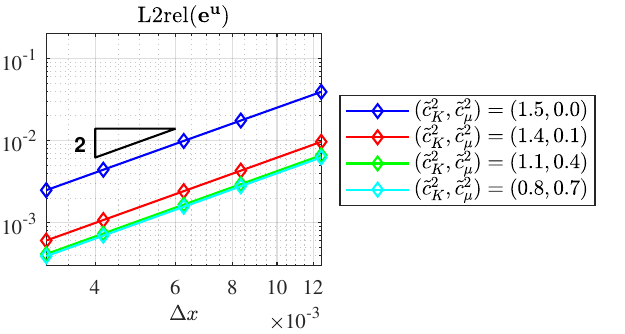} \hspace{-5.02cm}
    \includegraphics{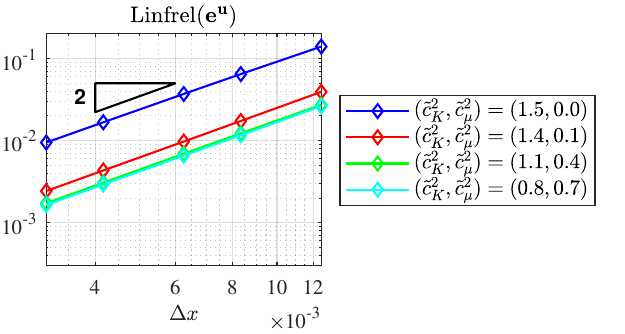}
    \caption{Convergence study with Dirichlet boundary conditions. Comparison of displacement error for multiple material parameter combinations.}
    \label{fig:convergence_disp_dbc}
\end{figure}

\begin{figure}[htb]
    \centering
    \includegraphics{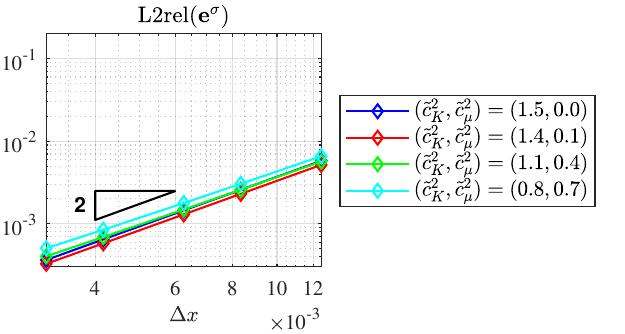} \hspace{-5.02cm}
    \includegraphics{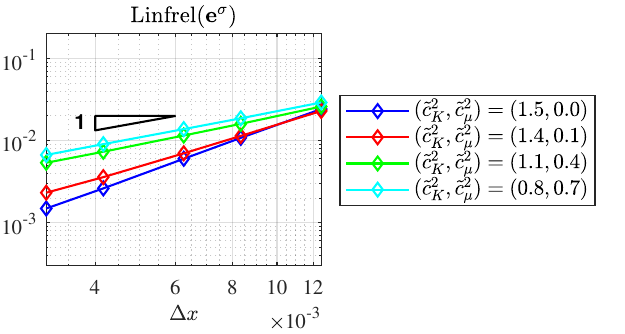}
    \caption{Convergence study with Dirichlet boundary conditions. Comparison of Cauchy stress error for multiple material parameter combinations.}
    \label{fig:convergence_stress_dbc}
\end{figure}

\subsection{Numerical stability tests}

In order to numerically verify the stability results of Propositions \ref{prop:stab_periodic} and \ref{prop:stability_dbc}, we track the evolution of the weighted L2 grid norm of the population vector during the numerical simulation. Because the case with the Dirichlet boundary condition builds on top of the stability result for periodic domains, we only demonstrate numerical examples with Dirichlet boundary conditions.

We again consider the domain $\Omega=(0,L)^2$ and take $t_f=10T$. We initialize with
\begin{align}
    \boldsymbol{u}_0(\boldsymbol{x}) = \left[\begin{matrix}
        \sin{(4\pi x)}\sin{(2\pi y)}\sin{(4\pi(t-0.1))} \\
        \sin{(4\pi x)}\sin{(2\pi y)}\sin{(4\pi(t+0.4))}
    \end{matrix}\right],\quad \boldsymbol{v}_0(\boldsymbol{x})=\partial_t\boldsymbol{u}_0(\boldsymbol{x}) \qquad \forall \boldsymbol{x}\in\Omega,
\end{align}
which is compatible with homogeneous Dirichlet boundary conditions. As material parameters we set $(\Tilde{c}_K^2,\Tilde{c}_{\mu}^2)=(1.1,0.4)$ and use a discretization with $(\Delta x, \Delta t)=(1/160,1/400)$. Therefore, we satisfy the stability criterion of Eq. \eqref{eq:stab_cond}: $2\sqrt{\Tilde{c}_K^2+\Tilde{c}_{\mu}^2}/c \approx 0.98 < 1$. Additionally, we set $\boldsymbol{b}=\boldsymbol{0}$ in order to satisfy all conditions under which we established the result in Proposition \ref{prop:stability_dbc}. For the given setup we numerically observe
\begin{equation}
    \max_{t\in\mathbb{T}}\frac{\left|\,||\boldsymbol{f}||_{\boldsymbol{P}}(t)-||\boldsymbol{f}||_{\boldsymbol{P}}(0)\right|}{||\boldsymbol{f}||_{\boldsymbol{P}}(0)} \approx 3.4\cdot 10^{-15},
\end{equation}
which confirms the analysis up to machine precision. Note that the seemingly trivial initial condition along with the homogeneous Dirichlet boundary condition produces a highly complex and unsteady solution field with a dense spectrum, for which we are still observing the predicted behavior with optimal accuracy.

Next, we again solve the Dirichlet problem with the manufactured solution reported in Eq. \eqref{eq:periodic_moving}. The domain is again $(0,L)^2$, this time with $t_f=T$, and $(\Tilde{c}_K^2,\Tilde{c}_{\mu}^2)=(1.4,0.1)$. As opposed to the previous example, we now prescribe a nonzero body load $\boldsymbol{b}$ and inhomogeneous Dirichlet boundary conditions $\boldsymbol{u}_D$ and therefore do not conserve the population vector $\boldsymbol{f}$ in the weighted L2 grid norm. However, $\boldsymbol{f}$ should still remain bounded, with the bound only depending on the data and not on the discretization parameters. Therefore, we compare the time evolution of $||\boldsymbol{f}(t)||_{\boldsymbol{P}}$ on two different lattices with $(\Delta x,\Delta t)\in\{(1/160,1/400),(1/320,1/800)\}$.

\begin{figure}[htb]
    \centering
    \includegraphics{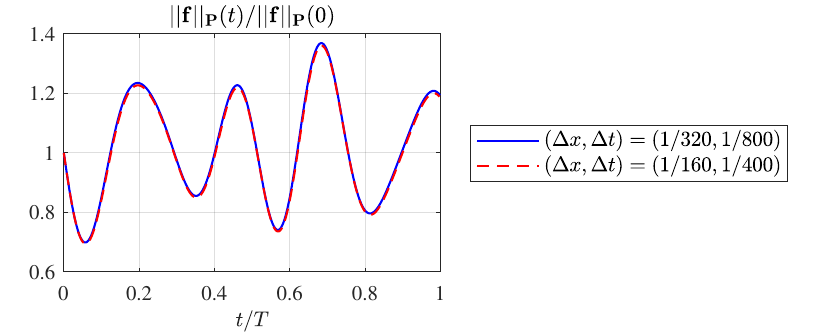}
    \caption{Comparison of $||\boldsymbol{f}||_{\boldsymbol{P}}$ with nonzero body load and Dirichlet boundary conditions on two different discretizations.}
    \label{fig:stability_fnorm}
\end{figure}

Figure \ref{fig:stability_fnorm} shows that the evolution is in close agreement for the two discretizations. However, a small and over time increasing offset towards lower values is observed for the coarser discretization. A straightforward explanation is based on the following reasoning. Assuming that the finer discretization is obtained by halving the lattice spacing and time step size of the coarser one (or in general by dividing by a positive integer), we see that
\begin{equation}
    (\mathbb{G}\times\mathbb{T})_{\text{coarse}} \subset (\mathbb{G}\times\mathbb{T})_{\text{fine}} \quad \Rightarrow \quad \sup_{(\boldsymbol{x},t)\in(\mathbb{G}\times\mathbb{T})_{\text{coarse}}}\Tilde{\boldsymbol{B}}(\boldsymbol{x},t) \leq \sup_{(\boldsymbol{x},t)\in(\mathbb{G}\times\mathbb{T})_{\text{fine}}}\Tilde{\boldsymbol{B}}(\boldsymbol{x},t) \leq \sup_{(\boldsymbol{x},t)\in\Tilde{\Omega}\times(0,\Tilde{t}_f)}\Tilde{\boldsymbol{B}}(\boldsymbol{x},t),
\end{equation}
and similarly for the Dirichlet boundary condition $\Tilde{\boldsymbol{u}}_D$. Note that the last inequality holds for any discretization $\mathbb{G}\times\mathbb{T}$ and therefore we can always bound the maximum possible increase of $||\boldsymbol{f}||_{\boldsymbol{P}}$ on any chosen discretization if the data itself is bounded.

Lastly, we perform an analysis using the manufactured solution in Eq. \eqref{eq:periodic_moving} with $(\Delta x, \Delta t)=(1/160,1/400)$ up until $t_f=2500T$, which results in $10^6$ time steps on $25600$ nodes. In order to assess the time evolution of the error, we define the relative L2 norm of the displacement error integrated only in space, i.\,e.
\begin{equation}
    \begin{split}
        \text{L2rel}^{\Omega}\left(\boldsymbol{e}^{\boldsymbol{u}}\right) &:= \text{L2}^{\Omega}\left(\boldsymbol{e}^{\boldsymbol{u}}\right)/\text{L2}^{\Omega}\left(\hat{\boldsymbol{u}}\right) \\
        \text{with } \text{L2}^{\Omega}\left(\boldsymbol{e}^{\boldsymbol{u}}\right) &:= \left(\Delta x^2\sum_{(\boldsymbol{x},t)\in\mathbb{G}}|\boldsymbol{e}^{u}(\boldsymbol{x},t)|^2\right)^{\tfrac{1}{2}}.
    \end{split}
\end{equation}

\begin{figure}[htb]
    \centering
    \includegraphics{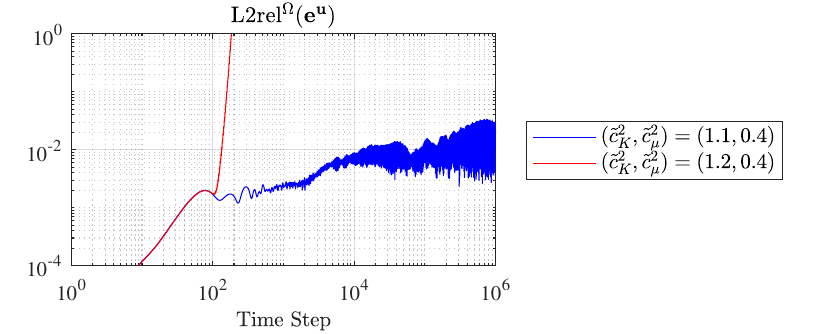}
    \caption{Long-term evolution of the numerical error for stable and unstable parametrizations.}
    \label{fig:stability_long}
\end{figure}

Accordingly, Figure \ref{fig:stability_long} shows the long-term evolution of this error norm for two different experiments with $(\Tilde{c}_K^2,\Tilde{c}_{\mu}^2)\in\{(1.1,0.4),(1.2,0.4)\}$. For the given discretization parameters this results in one parametrization which satisfies Eq. \eqref{eq:stab_cond} and another which does not:
\begin{equation}
    \frac{2\sqrt{\Tilde{c}_K^2+\Tilde{c}_{\mu}^2}}{c} = \frac{2\sqrt{1.1+0.4}}{2.5} \approx 0.98 < 1 \text{ (stable)}, \qquad\qquad \frac{2\sqrt{1.2+0.4}}{2.5} \approx 1.01 > 1 \text{ (unstable)}.
\end{equation}

As expected, for the parameters that satisfy Eq. \eqref{eq:stab_cond} stability is retained for up to $10^6$ time steps, whereas the other setup leads to divergence within a few hundred steps. Looking more closely at the error evolution for the stable simulation we observe a slow but gradual increase in the error, which can be attributed to the build-up of numerical approximation errors scaling as $\mathcal{O}\left(\Delta t^2\right)$. In Figure \ref{fig:stability_cut}, we compare the exact and numerical displacement solution along an arbitrary horizontal cut at $t=2500T$ (corresponding to $5000$ cycles of the solution in time). The good agreement after $10^6$ time steps further serves as a strong verification of the excellent non-dissipative behavior of the method (note that the forcing computed by the method of manufactured solutions is solely based on the exact solution and is therefore unable to possibly compensate any kind of dissipation occurring during the solution process).

\begin{figure}[htb]
    \centering
    \includegraphics{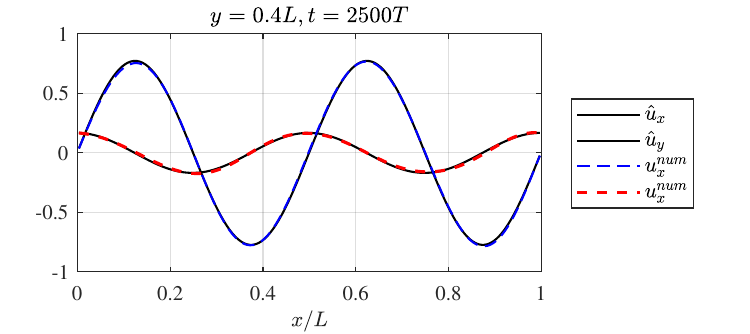}
    \caption{Comparison of exact and numerical displacement solution after $10^6$ time steps.}
    \label{fig:stability_cut}
\end{figure}

\section{Conclusions}

We propose a novel vectorial LBM formulation for linear elastodynamics on 2D rectangular domains with periodic or Dirichlet boundary conditions. 
For the derivation of the new LBM formulation, we first reformulate the target equation as a first-order hyperbolic system of equations. For this equivalent problem statement we construct a LBM scheme using vector-valued populations, where the evolution of each scalar component of the populations approximates one of the scalar equations of the system. Because the individual equations of the hyperbolic system are relatively simple in nature, it is sufficient to employ only four populations, which results in the D2Q4$^5$ velocity set for our problem. Using the asymptotic expansion technique, we establish second-order consistency of the novel formulation under the condition that second-order consistent initial and boundary formulations are used. Extending the asymptotic expansion results of the domain interior to the initial time and to domain boundaries enables us to systematically derive the corresponding initial and boundary formulations. Both with and without Dirichlet boundary conditions, we further derive a CFL-like stability condition using the notion of pre-stability structures extended to vectorial LBM. All results are verified by numerical experiments using the method of manufactured solutions and long-term stability tests.

With this initial contribution, we obtained a novel LBM formulation for linear elastodynamics that overcomes the shortcomings of previous works -- by providing second-order consistency and rigorous stability estimates. Furthermore, note that the definition of $\boldsymbol{\Phi}_x$ and $\boldsymbol{\Phi}_y$ in Eq. \eqref{eq:system} can be generalized to arbitrary nonlinear behavior as well and therefore a straightforward extension to nonlinear material behavior and/or large deformation settings appears feasible. In this contribution, linearity of the problem has only been exploited in the consistency analysis and construction of the stability proof. In consistency analysis, nonlinear behavior can be easily accommodated, whereas the stability analysis potentially requires a more general treatment for nonlinear problems.

The main limitation of the present formulation is that it can only be used for periodic or Dirichlet problems with grid-aligned boundaries. Therefore, the next natural step will be to generalize the boundary formulation in terms of both Neumann boundary conditions and arbitrary geometries. Already in the context of "conventional" LBM for fluid mechanics or elastostatics \cite{Boolakee2023,Boolakee2023a}, the design of second-order accurate boundary formulations on arbitrary domains turns out to be challenging. The main reasons include
\begin{itemize}
    \item mismatch of the number of physical boundary conditions present and the number of missing incoming populations that need to be reconstructed \cite{Maier1996,Zou1997};
    \item the component-wise resolution of vector or tensor-valued quantities in LBM making rotation into a boundary-aligned reference system challenging \cite{Huang2016,Boolakee2023a};
    \item compensation of higher-order derivatives that naturally appear during the asymptotic expansion of boundary formulations \cite{Guo2002,Boolakee2023a}
    \item the intention of preserving a local formulation that requires only minimal neighbor node information \cite{Junk2005BC,Yang2007};
    \item stability issues that only in few specialized cases have been analyzed analytically (such as in this work for aligned boundaries and in \cite{Junk2005BC,Rheinlander2010,Zhao2020bc}).
\end{itemize}
Accordingly, the derivation of boundary formulations on arbitrary geometries as well as Neumann-type boundary conditions will require extensive additional investigations and will be the focus of future studies.

Another limitation of the method is that by coupling the velocity discretization to the spatial discretization the LBM is restricted to Cartesian grids. While this is a  limitation of the LBM irrespective of the equation being solved, there are several methods known from fluid dynamics to overcome it. Among these are interpolation based methods \cite{fakhari2014finite} and semi-Lagrangian off-lattice methods \cite{kramer2017semi} that modify the streaming step. Besides these there exist many established grid refinement methods that couple Cartesian grids with varying resolution \cite{filippova1998grid,dupuis2003theory,lagrava2012advances,geier2009bubble,Kutscher2019}. All these method use interpolation of populations or moments at the interface together with an appropriate reconstruction of the non-equilibrium populations. The so-called compact interpolation methods \cite{geier2009bubble,Kutscher2019} acquire the interpolation polynomial by including the gradient information from asymptotic analysis. A similar approach would also be applicable for our vectorial LBM, which however requires further detailed investigations. 


\section*{Declaration of competing interest}
The authors declare that they have no known competing financial interests or personal relationships that could have appeared to influence the work reported in this paper.

\section*{CRediT authorship contribution statement}
\textbf{Oliver Boolakee}: Conceptualization, Methodology, Software, Formal analysis, Investigation, Data curation, Writing – original draft, Writing – Review \& Editing, Visualization. \textbf{Martin Geier}: Methodology, Formal analysis, Investigation, Writing – review \& editing. \textbf{Laura De Lorenzis}: Conceptualization, Methodology, Writing – review \& editing, Supervision, Funding acquisition.

\section*{Acknowledgments}
Martin Geier acknowledges financial support by the German Research Foundation (DFG) project number 414265976-TRR 277.

\appendix

\section{From Boltzmann equation to LBM} \label{app:lbm}

In this appendix, we provide a brief summary of the simplification and approximation steps that lead from the Boltzmann equation known from gas kinetics to the lattice Boltzmann equation used in LBM. Note, however, that the derivation of the novel formulation in this work is completely agnostic to these origins of LBM and treats the lattice Boltzmann equation simply as a template for a new numerical method that inherits the beneficial properties of LBM. The primary purpose of this appendix is to provide some more background information on where the lattice Boltzmann equation comes from -- more details can be found in e.\,g. \cite{He1997,Succi2001,Kruger2017,Lallemand2021}.

The starting point is the Boltzmann equation
\begin{equation}
    \partial_t f + \boldsymbol{\xi}\cdot \nabla f = \mathcal{A}(f),
\end{equation}
where $f = f(\boldsymbol{x},t,\boldsymbol{\xi})$ is the distribution function that depends on position $\boldsymbol{x}\in\mathbb{R}^d$, time $t\in\mathbb{R}$ and microscopic velocity $\boldsymbol{\xi}\in\mathbb{R}^d$, with $d$ being the number of dimensions. The local collision operator is denoted by $\mathcal{A}$, which originally refers to the Boltzmann operator. However, \cite{BGK} proposed the greatly simplified BGK collision operator that instead models a relaxation-type process towards some local equilibrium state $f^{eq}(f)$:
\begin{equation}
    \mathcal{A}(f) = -\frac{1}{\tau}(f-f^{eq}(f)).
\end{equation}
The speed of this relaxation is governed by the relaxation time $\tau$. The connection between this mesoscopic description and the macroscopic physics is achieved through moments, which are weighted integrals over velocity space:
\begin{equation}
    \int_{\mathbb{R}^d}\eta(\boldsymbol{\xi})fd\boldsymbol{\xi},
\end{equation}
with $\eta(\boldsymbol{\xi})$ as some polynomial of the microscopic velocity $\boldsymbol{\xi}$. For example, in order to obtain the so-called zeroth-order or first-order moment, we choose $\eta(\boldsymbol{\xi})=1$ or $\eta(\boldsymbol{\xi})=\boldsymbol{\xi}$, respectively.
Furthermore, some of the moments are conserved during collision; e.g. from conservation of the zeroth-order moment follows
\begin{equation}
\begin{split}
    \int_{\mathbb{R}^d}fd\boldsymbol{\xi} &\overset{!}{=} \int_{\mathbb{R}^d}f^{eq}d\boldsymbol{\xi}.
\end{split}
\end{equation}

As the first simplification step, we replace the dense velocity space $\boldsymbol{\xi}\in\mathbb{R}^d$ with a finite set of velocities $\mathbb{V}^*\subset\mathbb{R}^d$ based on a numerical quadrature of the moment calculation. For the example of the zeroth-order moment, we obtain
\begin{equation}
    \int_{\mathbb{R}^d}fd\boldsymbol{\xi} = \int_{\mathbb{R}^d}f^{eq}d\boldsymbol{\xi} = \sum_{i=1}^N w_{i} f(\boldsymbol{x},t,\boldsymbol{\xi}_i) \qquad \text{for some } w_i=\mathbb{R},\; \boldsymbol{\xi}_i\in\mathbb{V}^* \quad \forall i=1,\dots,N,
\end{equation}
where $N$ is the number of weights $w_i$ and abscissae $\boldsymbol{\xi}_i$. These  need to be carefully chosen -- based on the functional form of $f^{eq}$. Because we require only a finite number of moments to solve physics on the macroscopic scale, it is sufficient if only a finite number of moments is correctly recovered. Hereby, the complexity of the physics on the macroscopic scale governs the size of the velocity set. As a result, we obtain the discrete velocity Boltzmann equation
\begin{equation}
    \partial_t f_i + \boldsymbol{\xi}_i\cdot \nabla f_i = -\frac{1}{\tau}(f_i-f_i^{eq}) + \varphi_i(B) \qquad \forall i=1,\dots,Q,
\end{equation}
which involves the so-called populations $f_i:=w_{i} f(\boldsymbol{x},t,\boldsymbol{\xi}_i)$. Here $Q\geq N$ is the total number of populations and velocities. Additionally, we add a forcing term $\varphi_i(B)$ in order to incorporate some external forcing $B$.

As a next step, we discretize space and time using the method of characteristics along $\boldsymbol{\xi}_i$ for each equation. Integrating from current time $t$ until $t+\Delta t$ along the characteristic yields
\begin{equation}
    f_i(\boldsymbol{x}+\boldsymbol{\xi}_i \Delta t,t+\Delta t) - f_i(\boldsymbol{x},t) = -\int_0^{\Delta t} \left[\frac{1}{\tau}\left(f_i(\boldsymbol{x}+\boldsymbol{\xi}_i \zeta,t+\zeta)-f_i^{eq}(\boldsymbol{x}+\boldsymbol{\xi}_i \zeta,t+\zeta)\right) + \varphi_i(B)\right] d\zeta.
\end{equation}
Applying the second-order trapezoidal rule to approximate the right-hand side integral and with the change of variables $\bar{f}_i:=f_i-\frac{\Delta t}{2}\left(-\frac{1}{\tau}(f_i-f_i^{eq}) + \varphi_i(B)\right)$ \cite{He1998m}, we obtain after some algebra
\begin{equation}
    \bar{f}_i(\boldsymbol{x}+\boldsymbol{\xi}_i \Delta t,t+\Delta t) - \bar{f}_i(\boldsymbol{x},t) = -\frac{\Delta t}{\bar{\tau}}\left(\bar{f}_i(\boldsymbol{x},t)-f^{eq}_i(\boldsymbol{x},t)\right) + \frac{\Delta t}{2}\left(2-\frac{\Delta t}{\bar{\tau}}\right)\varphi_i(B),
\end{equation}
where $\bar{\tau}:=\tau+\Delta t/2$. In order to apply the forcing on the zeroth-order moment (as is the case for our formulation), it turns out that $\varphi_i(B)=w_i B$. Defining $W_i:= w_i/2$, $\omega :=\Delta t/\bar{\tau}$ and dropping the over-bar of $f_i$ leads to
\begin{equation}
    f_i(\boldsymbol{x}+\boldsymbol{\xi}_i \Delta t,t+\Delta t) - f_i(\boldsymbol{x},t) = -\omega\left(f_i(\boldsymbol{x},t)-f^{eq}_i(\boldsymbol{x},t)\right) + \Delta t(2-\omega)W_i B
\end{equation}
Finally, we recover Eq. \eqref{eq:lbe} if we assume $d=2$, switch to two-index notation ($i\rightarrow(i,j)$), re-scale space and/or time so that $\boldsymbol{\xi}_{ij}\in\mathbb{V}^*\rightarrow\boldsymbol{c}_{ij}\in\mathbb{V}$ with $\boldsymbol{c}_{ij}$ as defined in Eq. \eqref{eq:set_vel} and lastly introduce vector-valued populations and forcing.

As a final note, we point out that all formulations proposed in Section \ref{sec:lbm} are established independently of this derivation and only take the algorithmic structure of LBM in the form of Eq. \eqref{eq:lbe} as a starting point.



\bibliographystyle{elsarticle-num}
\bibliography{bibliography}







\end{document}